\newcommand{\sysn}{\left\{\begin{array}{rcl}}
\newcommand{\sysk}{\end{array}\right.}
\newtheorem{theorem}{Theorem}[section]
\newtheorem{lemma}[theorem]{Lemma}
\theoremstyle{example}
\newtheorem{proposition}[theorem]{Proposition}
\newtheorem{corollary}[theorem]{Corollary}
\theoremstyle{definition}
\newtheorem{definition}[theorem]{Definition}
\newtheorem{remark}[theorem]{Remark}
\journal{...}
\begin{document}

\title{Tightness type properties of spaces of quasicontinuous functions}

\author{Anton E. Lipin, Alexander V. Osipov}

\address{Krasovskii Institute of Mathematics and Mechanics, \\ Ural Federal
 University, Yekaterinburg, Russia}

\ead{tony.lipin@yandex.ru, oab@list.ru}

\begin{abstract} Using approximation by continuous functions we prove the following statements to types of tightness in a space $Q_p(X, \mathbb{R})$ of all quasicontinuous real-valued functions with the topology $\tau_p$ of pointwise convergence: the countability of tightness (fan-tightness, strong fan-tightness)  at a point $f$ of space $Q_p(X, \mathbb{R})$  implies the countability of tightness (fan-tightness, strong fan-tightness) of space $Q_p(X,Y)$ of all quasicontinuous functions from $X$ into any non-one-point metrizable space $Y$. This result is the answer to the open question in the class of metrizable spaces.

\end{abstract}


\begin{keyword} quasicontinuous function
\sep Lusin space \sep open Whyburn space \sep tightness  \sep  fan-tightness  \sep strong fan-tightness
 \sep selection principle \sep continuous selection

\MSC[2020] 54C35 \sep 54A25 \sep 54D20  \sep 54C10

\end{keyword}

\maketitle 


\section{Introduction}

A function $f:X\rightarrow Y$ is {\it quasicontinuous} at $x$ if for any open set $V$ containing $f(x)$ and any $U$ open containing $x$, there exists a nonempty open set
$W\subseteq U$ such that $f(W)\subseteq V$. It is {\it quasicontinuous} if it is quasicontinuous at
every point.
Call a set semi-open (or quasi-open) if it is contained in the closure of its
interior. Then $f: X\rightarrow Y$ is quasicontinuous if and only if the inverse of every
open set is quasi-open.

\medskip

Let $X$ and $Y$ be Hausdorff  topological spaces, $Q_p(X, Y)=(Q(X, Y),\tau_p)$ be
the space of all  quasicontinuous functions on $X$ with values in
$Y$ and $\tau_p$ be the pointwise convergence topology.


We were motivated for this paper by the problem of tightness in a space of quasi-continuous functions.

It is an open problem whether the space $Q_p(X,\mathbb{R})$ is topologically homogeneous (note that this space is not a linear space, since the sum of quasi-continuous functions is not necessarily quasi-continuous). In particular, it is unknown whether the points of $Q_p(X,\mathbb{R})$ can be distinguished using some topological property or cardinal invariant. In this regard, in (\cite{Osipov}, Question 1) the second author raised a question on tightness in the space $Q_p(X,\mathbb{R})$. It was proved in \cite{Osipov} that for every continuous $f : X \to \mathbb{R}$ we have $t(f, Q_p(X,\mathbb{R})) = \omega$ if and only if the space $X$ is $\mathcal{K}_\Omega$-Lindelof, and it was unknown whether the same is true for non-continuous functions.

In this paper we prove that the property of countable tightness (as well as countable fan-tightness and countable strong fan-tightness) can not distinguish points of the space $Q_p(X,Y)$ whenever $Y$ is metrizable.

One of the main tools of our proof is Theorem \ref{T1} below on approximation of arbitrary functions by continuous functions (in the metric of uniform convergence).

\section{Preliminaries}

Let us recall some properties of a
topological space $X$.

(1)  A space $X$ has {\it countable tightness} at a point $x$ (denoted $t(x, X)=\omega$) if $x\in
\overline{A}$, then $x\in \overline{B}$ for some countable $B\subseteq A$. A space $X$ has countable tightness (denoted $t(X)=\omega$) if $t(x, X)=\omega$ for every $x\in X$.

(2) A space $X$ has {\it countable fan-tightness} at a point $x$ (denoted $vet(x, X)=\omega$)  if for any countable family $\{A_n : n\in\omega\}$
of subsets of $X$ satisfying $x\in \bigcap_{n\in \omega} \overline{A_n}$ it is possible to select finite sets $K_n\subset A_n$ in such a way that $x\in \overline{\bigcup_{n\in\omega} K_n}$.
A space $X$ has countable fan-tightness (denoted $vet(X)=\omega$) if $vet(x, X)=\omega$ for every $x\in X$.

(3)  A space $X$ is said to have {\it countable
strong fan-tightness} at a point $x$ (denoted $vet_1(x, X)=\omega$) if for each countable family $\{A_n: n\in \omega\}$
of subsets of $X$ such that $x\in \bigcap_{n\in \omega} \overline{A_n}$, there exist $a_i\in A_i$ such that
$x\in \overline{\{a_i: i\in\omega\}}$. A space $X$ has countable strong fan-tightness (denoted $vet_1(X)=\omega$) if $vet_1(x, X)=\omega$ for every $x\in X$.

(4) A space $X$ is said to be  {\it open
Whyburn} if for every open set $A\subset X$ and every $x\in
\overline{A}\setminus A$ there is an open set $B\subseteq A$ such
that $\overline{B}\setminus A=\{x\}$ \cite{Os1}.

\medskip
Note that the class of open
Whyburn spaces is quite wide; for example, it includes all first countable regular spaces \cite{Os1} and, therefore, all metrizable spaces.

\medskip

Let $X$ be a Tychonoff topological space, $C(X,\mathbb{R})$ be the
space of all  continuous functions on $X$ with values in
$\mathbb{R}$ and $\tau_p$ be the pointwise convergence topology.
Denote by $C_p(X,\mathbb{R})$ the topological space
$(C(X,\mathbb{R}), \tau_p)$.

 The symbol $\bf{0}$
stands for the constant function to $0$. A basic open neighborhood
of $\bf{0}$ in $\mathbb{R}^X$  is of the form $[F, (-\varepsilon, \varepsilon)]=\{f\in
\mathbb{R}^X: f(F)\subset (-\varepsilon, \varepsilon)\}$, where $F\in
[X]^{<\omega}$ and $\varepsilon>0$.

\medskip
Let us recall that a cover $\mathcal{U}$ of a set $X$ is called

$\bullet$ an {\it $\omega$-cover} if each finite set $F\subseteq
X$ is contained in some $U\in \mathcal{U}$;

$\bullet$ a {\it $\gamma$-cover} if for any $x\in X$ the set
$\{U\in \mathcal{U}: x\not\in U\}$ is finite.

\medskip

In this paper $\mathcal{A}$ and $\mathcal{B}$ will be collections
of the following  covers of a space $X$:


$\mathcal{O}^s$ : the collection of all semi-open covers of $X$.

$\Omega$ : the collection of open $\omega$-covers of $X$.

$\mathcal{K}$: the collection $\mathcal{U}$ of open subsets of $X$ such that $X=\bigcup\{\overline{U}: U\in \mathcal{U}\}$.

$\mathcal{K}_{\Omega}$ is the set of $\mathcal{U}$ in $\mathcal{K}$ such that no element of $\mathcal{U}$ is dense in $X$, and for each finite set $F\subseteq X$, there is a $U\in \mathcal{U}$ such that $F\subseteq \overline{U}$.

 \medskip

A space is {\it $\mathcal{K}_{\Omega}$-Lindel\"{o}f} if each element of $\mathcal{K}_{\Omega}$ has a countable subset in $\mathcal{K}_{\Omega}$ \cite{Sh2}.

\begin{definition}(\cite{Kun})
A Hausdorff space $X$ is called a {\it Lusin space}  if

(a) Every nowhere dense set in $X$ is countable;

(b) $X$ has at most countably many isolated points;

(c) $X$ is uncountable.
\end{definition}

If $X$ is an uncountable Hausdorff space then $X$ is $\mathcal{O}^s$-Lindel\"{o}f (semi-Lindel\"{o}f) if
and only if $X$ is a Lusin space (Corollary 2.5 in \cite{GJR}).

\medskip

If $X$ is a Lusin space, $X$ is hereditarily
Lindel\"{o}f (Lemma 1.2 in~\cite{Kun}). Hence, if $X$ is a regular Lusin space then $X$ is perfect normal (3.8.A.(b)~in~\cite{Eng}).

\medskip

If $X$ is a Lusin space, so is every uncountable subspace (Lemma 1.1 in~\cite{Kun}).

\medskip

Many topological properties are defined or characterized in terms
 of the following classical selection principles (see \cite{H1}).
 Let $\mathcal{A}$ and $\mathcal{B}$ be sets consisting of
families of subsets of an infinite set $X$. Then:

$S_{1}(\mathcal{A},\mathcal{B})$ is the selection hypothesis: for
each sequence $\{A_{n}: n\in \mathbb{N}\}$ of elements of
$\mathcal{A}$ there is a sequence $\{b_{n}\}_{n\in \mathbb{N}}$
such that for each $n$, $b_{n}\in A_{n}$, and $\{b_{n}:
n\in\mathbb{N} \}\in \mathcal{B}$.

$S_{fin}(\mathcal{A},\mathcal{B})$ is the selection hypothesis:
for each sequence $\{A_{n}: n\in \mathbb{N}\}$ of elements of
$\mathcal{A}$ there is a sequence $\{B_{n}\}_{n\in \mathbb{N}}$ of
finite sets such that for each $n$, $B_{n}\subseteq A_{n}$, and
$\bigcup_{n\in\mathbb{N}}B_{n}\in\mathcal{B}$.

\medskip
In this paper, by metric space we mean a non-trivial metric space, that is, of cardinality greater than 1.

\medskip
For other notation and terminology almost without exceptions we follow the Engelking's book \cite{Eng} and the papers \cite{Os1,Osipov}.

\section{Approximation by continuous functions}

\begin{theorem}\label{T1} Let $X$ be a normal space. For any $f\in \mathbb{R}^X$ there exists a continuous function $g:X\rightarrow \mathbb{R}$ such that $|f(x)-g(x)|\leq \frac{osc(f)}{2}$, where $osc(f)$ is the oscillation of $f$.
\end{theorem}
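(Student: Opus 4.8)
The plan is to reduce the statement to a semicontinuous insertion problem and then bring in normality in the form of the Katětov--Tong insertion theorem. Write $c=osc(f)=\sup_{x\in X}osc(f,x)$, where $osc(f,x)=\inf\{\operatorname{diam} f(U): U \text{ open}, x\in U\}$ is the local oscillation. If $c=+\infty$ the desired inequality is vacuous and any continuous function (say $g\equiv 0$) works, so I would assume $c<+\infty$.

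First I would introduce the upper and lower envelopes
$$\phi(x)=\inf_{U\ni x}\sup_{y\in U}f(y),\qquad \psi(x)=\sup_{U\ni x}\inf_{y\in U}f(y),$$
the infimum and supremum being taken over the open neighborhoods $U$ of $x$. Taking $y=x$ gives $\psi(x)\le f(x)\le\phi(x)$, and since every local oscillation is finite one checks that $\phi,\psi$ are real-valued with $\phi(x)-\psi(x)=osc(f,x)\le c$ at every point (the identity follows because, along the downward-directed family of neighborhoods, $\sup_U f$ decreases to $\phi(x)$ while $\inf_U f$ increases to $\psi(x)$). A routine argument from the definition shows that $\phi$ is upper semicontinuous and $\psi$ is lower semicontinuous.

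Next I would set $L=\phi-\tfrac{c}{2}$ and $H=\psi+\tfrac{c}{2}$. Then $L$ is upper semicontinuous, $H$ is lower semicontinuous, and the pointwise bound $\phi(x)-\psi(x)\le c$ is precisely $L(x)\le H(x)$ for all $x$. Now comes the main step: since $X$ is normal, the Katětov--Tong insertion theorem furnishes a continuous $g:X\to\mathbb{R}$ with $L\le g\le H$, that is, $\phi(x)-\tfrac{c}{2}\le g(x)\le\psi(x)+\tfrac{c}{2}$ for every $x$. Combining this with $\psi(x)\le f(x)\le\phi(x)$ gives $f(x)-g(x)\le\phi(x)-(\phi(x)-\tfrac{c}{2})=\tfrac{c}{2}$ and $g(x)-f(x)\le(\psi(x)+\tfrac{c}{2})-\psi(x)=\tfrac{c}{2}$, hence $|f(x)-g(x)|\le\tfrac{c}{2}$, as required; the sharp constant $\tfrac{c}{2}$ emerges from splitting the tolerance symmetrically about the envelope.

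The heart of the argument, and the only place where the hypothesis on $X$ is used, is the insertion step: squeezing a genuinely continuous function between a u.s.c.\ lower bound and an l.s.c.\ upper bound is a Urysohn-type gluing that fails without normality (indeed this insertion property characterizes normal spaces). If a self-contained treatment is preferred, I would reconstruct this step directly, building $g$ as a uniform limit of finite combinations of Urysohn functions that separate the closed sets $\{\phi\ge r\}$ and $\{\psi\le s\}$ across a dense set of levels $r,s$; the uniform convergence is exactly what preserves the final estimate at $\tfrac{c}{2}$. The remaining verifications---semicontinuity of the envelopes, the identity $osc(f,x)=\phi(x)-\psi(x)$, and finiteness of $\phi,\psi$---are routine and I expect to dispatch them quickly.
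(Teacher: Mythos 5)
Your proof is correct, and it is essentially the argument behind the paper's own treatment: the paper gives no proof but cites \cite{Ben} (Proposition 1.18) and \cite{Eng} (Problem 1.7.15(b)), where the statement is obtained in exactly your way --- form the u.s.c.\ upper envelope $\phi$ and l.s.c.\ lower envelope $\psi$, note $\phi-\psi=osc(f,\cdot)\le osc(f)$, and insert a continuous $g$ between $\phi-\frac{osc(f)}{2}$ and $\psi+\frac{osc(f)}{2}$ via the Kat\v{e}tov--Tong theorem, which is where normality enters. Your handling of the side cases (the vacuous case $osc(f)=\infty$, and finiteness of the envelopes when $osc(f)<\infty$ even for unbounded $f$, which the insertion theorem accommodates since it does not require boundedness) is also sound.
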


This result can be found, for instance, in  \cite{Ben} (Proposition 1.18) (see also \cite{Eng}, Problem 1.7.15 (b)).


Now let us denote $\mathbb{H}$ the Hilbert cube $\prod\limits_{n \in \mathbb{N}} [0, \frac{1}{n}]$ with the metric $d_\mathbb{H}$ defined as following:
$$d_\mathbb{H}(x, y) = \max\limits_{n \in \mathbb{N}} |x(n) - y(n)|.$$

It is easy to see that the metric $d_\mathbb{H}$ is equivalent to the most common euclidean metric of $\mathbb{H}$.
The known fact that the space $\mathbb{H}$ is universal for all separable metrizable spaces must explain our attention to the following

\begin{proposition}
If $X$ is a normal space, then for every function $f:X \to \mathbb{H}$ there is a continuous function $g : X \to \mathbb{H}$ such that
$$d_\mathbb{H}(f(x), g(x)) \leq \frac{osc(f)}{2}$$
for all $x \in X$.
\end{proposition}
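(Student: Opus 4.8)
The plan is to reduce the vector-valued statement to the scalar Theorem~\ref{T1} by working one coordinate at a time, and then to reassemble the coordinatewise approximations into a single continuous map into $\mathbb{H}$. Write $\pi_n : \mathbb{H} \to [0,\tfrac1n]$ for the $n$-th coordinate projection and set $f_n = \pi_n \circ f : X \to [0,\tfrac1n] \subseteq \mathbb{R}$. Since $X$ is normal, Theorem~\ref{T1} applied to each $f_n$ yields a continuous $g_n : X \to \mathbb{R}$ with $|f_n(x) - g_n(x)| \le osc(f_n)/2$ for every $x$.

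The first point to record is that passing to a coordinate cannot increase oscillation: because $|\pi_n(a) - \pi_n(b)| \le d_\mathbb{H}(a,b)$ for all $a,b \in \mathbb{H}$, the projection $\pi_n$ is $1$-Lipschitz, and hence $osc(f_n) \le osc(f)$. This holds whether the oscillation is read as the diameter of the range or as the supremum of the pointwise oscillations, since in both formulations oscillation is monotone under composition with a $1$-Lipschitz map. Consequently $|f_n(x) - g_n(x)| \le osc(f)/2$ for every $n$ and every $x$.

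Next I would correct the ranges, since the $g_n$ delivered by Theorem~\ref{T1} take values in $\mathbb{R}$ rather than in $[0,\tfrac1n]$. Let $r_n : \mathbb{R} \to [0,\tfrac1n]$ be the continuous, $1$-Lipschitz nearest-point retraction $r_n(t) = \min\{\max\{t,0\},\tfrac1n\}$, and put $\tilde g_n = r_n \circ g_n$. Since $f_n(x) \in [0,\tfrac1n]$ we have $r_n(f_n(x)) = f_n(x)$, so $|f_n(x) - \tilde g_n(x)| = |r_n(f_n(x)) - r_n(g_n(x))| \le |f_n(x) - g_n(x)| \le osc(f)/2$; thus the clamping preserves the estimate. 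Defining $g = (\tilde g_n)_{n}$ then gives a map $X \to \mathbb{H}$ with $d_\mathbb{H}(f(x),g(x)) = \max_n |f_n(x) - \tilde g_n(x)| \le osc(f)/2$, exactly the required bound.

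The step that needs genuine care is the continuity of the assembled map $g$, since continuity in each coordinate does not in general imply continuity into a sup-metric product. Here I would exploit the fact that the coordinate intervals shrink: given $\varepsilon > 0$ and $x_0 \in X$, every coordinate $n$ with $1/n < \varepsilon$ automatically satisfies $|\tilde g_n(x) - \tilde g_n(x_0)| \le 1/n < \varepsilon$ for all $x$, so only the finitely many remaining coordinates must be controlled, and for these the continuity of each $\tilde g_n$ supplies a common neighborhood of $x_0$ on which the estimate holds. Hence $d_\mathbb{H}(g(x),g(x_0))$ is small near $x_0$; equivalently, the $d_\mathbb{H}$-topology coincides with the product topology on $\mathbb{H}$ (which is the equivalence noted just before the statement), so coordinatewise continuity suffices. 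This is the one place where the special geometry of $\mathbb{H}$, as opposed to an arbitrary countable product of intervals, is genuinely used, and I expect it to be the main obstacle to a fully rigorous write-up.
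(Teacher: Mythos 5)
Your proof is correct and follows essentially the same route as the paper: apply Theorem~\ref{T1} coordinatewise, observe that the $1$-Lipschitz projections do not increase oscillation, and reassemble the coordinate approximations into a map to $\mathbb{H}$. In fact you are more careful than the paper, whose $g_n$ need not take values in $[0,\frac{1}{n}]$ (so its assembled $g$ need not map into $\mathbb{H}$ as written) and whose continuity claim is left as "easy to see"; your clamping retraction $r_n$ and the tail-coordinate continuity argument close exactly these gaps.
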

\begin{proof}
Suppose $f(x) = \{f_n(x)\}_{n \in \mathbb{N}}$.
For every $n \in \mathbb{N}$ choose some continuous function $g_n : X \to \mathbb{R}$ such that
$|f_n(x) - g_n(x)| \leq \frac{osc(f_n)}{2} \leq \frac{osc(f)}{2}$ for all $x \in X$.
Define $g(x) = \{g_n(x)\}_{n \in \mathbb{N}}$ for all $x \in X$.
It is easy to see that $g: X \to \mathbb{H}$ is continuous and $d_\mathbb{H}(f(x), g(x)) \leq \frac{osc(f)}{2}$.
\end{proof}

\section{Domain approximation}
\label{section_da}

\begin{definition}
Suppose we are given a topological space $X$, a metric space $Y$ and a function $f : X \to Y$. We say that a countable family $\mathcal{E}$ of closed subsets of $X$ is a $\mathrm{DA}${\it -family} (or {\it domain approximation family}) for the function $f$ if for every finite $K \subseteq X$ and any $\varepsilon > 0$ there is a set $F \in \mathcal{E}$ such that $K \subseteq F$ and $osc(f|_F) < \varepsilon$.
\end{definition}

\begin{definition}
Suppose we are given a topological space $X$, a metric space $Y$ and a function $f : X \to Y$.
For all $\varepsilon>0$ we denote $D_\varepsilon(f) = \{x \in X : osc(f,x) \geq \varepsilon\}$.
We also denote $D(f) = \bigcup\limits_{\varepsilon>0} D_\varepsilon(f)$, which is the set of all discontinueties of $f$.
\end{definition}

\begin{lemma}
Suppose we are given metric spaces $X$ and $Y$ and a function $f : X \to Y$ such that $|D(f)|\leq \omega$.
Then there is a $\mathrm{DA}$-family for $f$.
\end{lemma}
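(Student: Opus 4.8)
The plan is to construct the countable family $\mathcal{E}$ explicitly, exploiting the fact that restricting the domain can only shrink oscillation, so that the only points capable of spoiling the condition are the genuine discontinuities, which are countable by hypothesis.

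The first step is to record two elementary facts about the point-oscillation. Since $U\cap F\subseteq U$, one has $osc(f|_F,x)\le osc(f,x)$ for every closed $F$ and every $x\in F$; in particular the restricted oscillation vanishes at every continuity point of $f$, and an isolated point of $F$ contributes oscillation $0$. I would also use that each set $D_{1/n}(f)=\{x:osc(f,x)\ge 1/n\}$ is closed (the point-oscillation is upper semicontinuous) and, being contained in $D(f)$, is countable, while $D(f)=\bigcup_{n}D_{1/n}(f)$.

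Next I would define the candidate sets. Writing $O_{n,m}=\{x\in X:\operatorname{dist}(x,D_{1/n}(f))<1/m\}$ for the open $1/m$-neighbourhood of $D_{1/n}(f)$, I set
$$F_{n,m,T}=(X\setminus O_{n,m})\cup T,$$
with $T$ ranging over the finite subsets of $D_{1/n}(f)$. As each $D_{1/n}(f)$ is countable there are only countably many triples $(n,m,T)$, so $\mathcal{E}=\{F_{n,m,T}\}$ is a countable family of closed sets. Given a finite $K$ and $\varepsilon>0$ I would pick $n$ with $1/n<\varepsilon$, take $T=K\cap D_{1/n}(f)$, and observe that $K\setminus D_{1/n}(f)$ is a finite set disjoint from the closed set $D_{1/n}(f)$, hence at positive distance $\delta$ from it; choosing $m$ with $1/m<\delta$ forces $O_{n,m}$ to miss $K\setminus D_{1/n}(f)$, so that $K\subseteq F_{n,m,T}$. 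To check $osc(f|_{F})<\varepsilon$ I split $F=F_{n,m,T}$ into $X\setminus O_{n,m}$, whose points avoid $D_{1/n}(f)$ and thus satisfy $osc(f|_F,x)\le osc(f,x)<1/n$, and the finitely many points of $T$, each of which is isolated in $F$ because $T\subseteq D_{1/n}(f)\subseteq O_{n,m}$ gives $O_{n,m}\cap F=T$; the latter points therefore contribute oscillation $0$.

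The main obstacle is countability. Producing a single good $F$ for a fixed pair $(K,\varepsilon)$ is routine, but there are in general uncountably many finite sets $K$, so one cannot afford one set per pair. The point that makes the argument go through is that continuity points, and more generally points with $osc(f,\cdot)<1/n$, are \emph{free}: including them in $F$ never raises $osc(f|_F)$. Consequently $F$ only has to be adapted to the countably many high-oscillation points of $D_{1/n}(f)$, and a fixed countable scaffold of shrinking metric neighbourhoods $O_{n,m}$ together with finite selections $T\subseteq D_{1/n}(f)$ is enough to serve every $K$ and every $\varepsilon$ at once.
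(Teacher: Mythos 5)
Your proposal is correct and is essentially the paper's own argument: the paper takes $F_{(n,k,M)} = M \cup X \setminus O_{1/k}(D_{1/n}(f))$ with $M$ ranging over finite subsets of $D(f)$, which matches your $F_{n,m,T}$ up to the inessential restriction $T\subseteq D_{1/n}(f)$. The only difference is that the paper declares the verification ``obvious,'' whereas you spell out the details (closedness of $D_{1/n}(f)$, positive distance, isolation of the points of $T$), all of which are sound.
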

\begin{proof}
Let us denote $I=\mathbb{N} \times \mathbb{N} \times [D(f)]^{<\omega}$.
For all $\gamma=(n,k,M) \in I$ we define $$F_\gamma = M \cup X \setminus O_{1/k}(D_{1/n}(f)).$$
It is obvious that the family $\{F_\gamma : \gamma \in I\}$ is a $\mathrm{DA}$-family for $f$.
\end{proof}

\begin{corollary}\label{cor3.5}
If $X$ is a metrizable Lusin space, then for every metric space $Y$ and any $f \in Q(X,Y)$ there is a $\mathrm{DA}$-family for $f$.
\end{corollary}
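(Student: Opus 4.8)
The plan is to reduce Corollary~\ref{cor3.5} to the preceding Lemma. That Lemma produces a $\mathrm{DA}$-family for any function $f : X \to Y$ between metric spaces whose discontinuity set $D(f)$ is at most countable. So the whole task amounts to verifying two things: that a metrizable Lusin space $X$ is indeed a metric space (so the Lemma applies), and that a quasicontinuous function $f \in Q(X,Y)$ has at most countably many discontinuities. The first is immediate, since a metrizable space carries a compatible metric; so the heart of the matter is the bound $|D(f)| \leq \omega$.

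To establish $|D(f)| \leq \omega$, first I would recall that $D(f) = \bigcup_{n} D_{1/n}(f)$, so it suffices to show each $D_{1/n}(f)$ is countable, or better, that $D(f)$ is a countable union of nowhere dense sets each of which must be countable because $X$ is Lusin. The key structural fact about a quasicontinuous $f$ is that its set of points of discontinuity is meager — more precisely, I expect each $D_\varepsilon(f) = \{x : osc(f,x) \geq \varepsilon\}$ to be closed (oscillation is upper semicontinuous, so this set is always closed) and, crucially, nowhere dense when $f$ is quasicontinuous. The quasicontinuity is exactly what forces no interior: if $D_\varepsilon(f)$ contained an open set $U$, then on $U$ the function oscillates by at least $\varepsilon$ everywhere, yet quasicontinuity lets us find, inside any such $U$ and near any target value $f(x)$, a nonempty open $W$ on which $f$ lands in a small ball, contradicting the uniform lower bound on oscillation. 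So each $D_{1/n}(f)$ is a closed nowhere dense subset of $X$.

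Now I invoke the Lusin hypothesis: by condition (a) of the definition of a Lusin space, every nowhere dense subset of $X$ is countable. Hence each $D_{1/n}(f)$ is countable, and $D(f) = \bigcup_{n \in \mathbb{N}} D_{1/n}(f)$ is a countable union of countable sets, so $|D(f)| \leq \omega$. With this in hand, the Lemma applies directly to $f : X \to Y$ and yields a $\mathrm{DA}$-family, completing the proof.

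The main obstacle I anticipate is the quasicontinuity argument showing $D_\varepsilon(f)$ has empty interior. The upper semicontinuity giving closedness is routine, and the Lusin-to-countable step is a direct citation of the definition; but pinning down precisely how quasicontinuity contradicts a uniform oscillation bound on an open set requires care. One has to use the definition of $osc(f,x) \geq \varepsilon$ (every neighborhood of $x$ carries image-diameter at least $\varepsilon$) against the defining property of quasicontinuity (every open $U$ contains a nonempty open $W$ with $f(W)$ inside a preassigned small ball around $f(x)$), and reconcile the two to derive the contradiction cleanly. I would be alert to whether a single application of quasicontinuity suffices or whether one must iterate to shrink the image diameter below $\varepsilon$ on a genuinely open set.
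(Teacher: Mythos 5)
Your proof is correct and is exactly the argument the paper intends (the corollary is stated without proof as an immediate consequence of the preceding lemma): for a quasicontinuous $f$ each $D_{1/n}(f)$ is closed and, by quasicontinuity, has empty interior, hence is countable by condition (a) of the Lusin property, so $|D(f)|\leq\omega$ and the lemma applies. On your one point of hesitation: a single application of quasicontinuity suffices --- given open $U\subseteq D_\varepsilon(f)$ and $x\in U$, choose $W\subseteq U$ nonempty open with $f(W)$ inside a ball of radius $\varepsilon/4$ around $f(x)$; then $osc(f,w)\leq\varepsilon/2<\varepsilon$ for every $w\in W$, a contradiction.
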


\section{Flattening lemma}
\label{section_fl}

For every set $S$ we denote $(H_S, \rho_S)$ the metric $S$-hedgehog space, i.e. $H_S = \{0\} \cup ((0,1] \times S)$,
\begin{equation*}
\rho_S((p,\alpha), (q,\beta)) =
\begin{cases}
|p - q|, \; \alpha = \beta; \\
p + q, \; \alpha \ne \beta
\end{cases}
\end{equation*}
and $\rho_S(0, (p, \alpha)) = x$.

Recall that the $\omega$-th power of the space $H_S$ is an universal space for all metrizable spaces of weight $\leq|S|$ (see \cite{Eng}).
We fix on the space $H_S^\omega$ the metric
$$\rho_S^\omega(x, y) = \sup\limits_{n \in \omega} \frac{1}{n + 1}\rho_S(x(n), y(n)),$$
which generates the Tychonoff production topology.

\begin{definition}
Let $S$ be a set and suppose we are given $D \subseteq S$ and $\gamma \in D$.
We define the function $h_S^{D, \gamma} : H_S \to H_D$ in the following way: $h_S^{D, \gamma}(0) = 0$ and
\begin{equation*}
h_D^\gamma(x, \alpha) =
\begin{cases}
(x, \alpha), \; \alpha \in D; \\
(x, \gamma), \; \alpha \notin D.
\end{cases}
\end{equation*}
\end{definition}

Let us notice an obvious property of the function $h_D^\gamma$.

\begin{proposition}
Let $S$ be a set and suppose we are given $D \subseteq S$ and $\gamma \in D$.
Denote $C = D \setminus \{\gamma\}$.
Then for every $x \in H_C$ and $y \in H_S$ we have $\rho_S(h_D^\gamma(x), h_D^\gamma(y)) = \rho_S(x, y)$.
\end{proposition}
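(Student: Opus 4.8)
The plan is to first observe that $h_D^\gamma$ acts as the identity on the subspace $H_C$, and then to reduce the claimed identity to a short case analysis on the point $y$. Indeed, if $x \in H_C$ then either $x = 0$, in which case $h_D^\gamma(x) = 0$, or $x = (p, \alpha)$ with $\alpha \in C \subseteq D$, in which case the defining formula gives $h_D^\gamma(x) = (p, \alpha) = x$. Hence $h_D^\gamma(x) = x$ in all cases, and the statement to be proved collapses to $\rho_S(x, h_D^\gamma(y)) = \rho_S(x, y)$ for every $y \in H_S$.

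Next I would split according to the form of $y$. If $y = 0$ or $y = (q, \beta)$ with $\beta \in D$, then $h_D^\gamma$ fixes $y$ (resp. fixes the center), and there is nothing to prove. The only genuinely interesting case is $y = (q, \beta)$ with $\beta \notin D$, where the definition forces $h_D^\gamma(y) = (q, \gamma)$, so a spine absent from $D$ gets folded onto the spine $\gamma$. I must check that this folding does not change the distance to $x$.

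The key case, which carries the whole content of the proposition, is the verification that $\rho_S(x, (q, \gamma)) = \rho_S(x, (q, \beta))$ when $\beta \notin D$. If $x = 0$, both sides equal $q$ by the definition of $\rho_S$ at the center, so I may assume $x = (p, \alpha)$ with $\alpha \in C$. Here the crucial observation is that $\alpha \neq \gamma$, since $C = D \setminus \{\gamma\}$; this is exactly where the hypothesis $x \in H_C$ (rather than merely $x \in H_D$) enters. On the one hand $\beta \notin D$ while $\alpha \in D$, so $\alpha \neq \beta$ and hence $\rho_S(x, y) = p + q$; on the other hand $\alpha \neq \gamma$, so likewise $\rho_S(x, (q, \gamma)) = p + q$. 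The two ``through-the-center'' distances agree, which finishes the verification.

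I expect the only subtlety, and precisely the reason the statement restricts $x$ to $H_C$, to be this last case: were $\alpha$ allowed to equal $\gamma$, then folding $\beta$ onto $\gamma$ would convert a through-center distance $p + q$ into the along-spine distance $|p - q|$, and the equality would fail. Everything else is routine bookkeeping with the two-case definition of $\rho_S$.
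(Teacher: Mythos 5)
Your proof is correct and complete: the reduction to $h_D^\gamma(x)=x$ for $x\in H_C$, followed by the case analysis on $y$ (with the key case $\beta\notin D$, where both distances equal $p+q$ because $\alpha\neq\beta$ and $\alpha\neq\gamma$), is exactly the routine verification the paper has in mind when it states this as ``an obvious property'' without proof. Your remark on why $x$ must lie in $H_C$ rather than $H_D$ (folding $\beta$ onto $\gamma$ would turn $p+q$ into $|p-q|$ if $\alpha=\gamma$) correctly pinpoints the one subtlety the statement's hypotheses encode.
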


\begin{lemma}
\label{lemma_flattening}
Suppose we are given a metrizable space $X$ and its separable subset $A$.
Then there is a continuous function $\varphi : X \to \mathbb{H}$ such that:
\begin{itemize}
\item[$(*)$] $\varphi[U \cap A] \subseteq \mathrm{Int}_{\varphi[Y]}f[U]$ (for all open $U \subseteq Y$).
\end{itemize}
\end{lemma}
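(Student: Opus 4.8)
Reading the conclusion $(*)$ with the evident corrections $Y=X$ and $f=\varphi$, so that it asserts $\varphi[U\cap A]\subseteq \mathrm{Int}_{\varphi[X]}\varphi[U]$ for every open $U\subseteq X$, the plan is to produce $\varphi$ explicitly from distances to a countable dense subset of $A$ and to verify $(*)$ by a convergence argument rather than by invoking any embedding theorem. The point is that $\mathbb{H}$ is merely a convenient metrizable target, and the separability of $A$ is exactly what is needed to index the construction by $\mathbb{N}$.

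First I would fix a metric $d$ on $X$ bounded by $1$ (replacing $d$ by $\min(d,1)$ if necessary) and choose a countable set $\{a_n:n\in\mathbb{N}\}$ dense in $A$, which exists because $A$ is separable. Then I would define $\varphi:X\to\mathbb{H}$ by $\varphi(x)=\bigl(\tfrac1n\,d(x,a_n)\bigr)_{n\in\mathbb{N}}$; each coordinate is a continuous function into $[0,1/n]$, so $\varphi$ is a continuous map into $\mathbb{H}$. Since $d_\mathbb{H}$ induces the product topology, convergence in $\varphi[X]$ amounts to coordinatewise convergence of the scaled distances.

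The heart of the matter is the claim that for every $a\in A$ and every sequence $(x_j)$ in $X$, $\varphi(x_j)\to\varphi(a)$ forces $x_j\to a$. To see this I would fix $\varepsilon>0$, use density of $\{a_n\}$ in $A$ to pick $a_n$ with $d(a,a_n)<\varepsilon/2$, and note that convergence in the $n$-th coordinate gives $d(x_j,a_n)\to d(a,a_n)<\varepsilon/2$, whence $d(x_j,a)\le d(x_j,a_n)+d(a_n,a)<\varepsilon$ for all large $j$; as $\varepsilon$ was arbitrary, $x_j\to a$.

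Given this claim, $(*)$ follows quickly. Fixing open $U\subseteq X$ and $a\in U\cap A$, suppose $\varphi(a)\notin\mathrm{Int}_{\varphi[X]}\varphi[U]$; since $\varphi[X]\subseteq\mathbb{H}$ is metrizable, hence first countable, I could select a sequence $y_j\in\varphi[X]\setminus\varphi[U]$ with $y_j\to\varphi(a)$, write $y_j=\varphi(x_j)$, and apply the claim to get $x_j\to a$, so that $x_j\in U$ eventually and therefore $y_j=\varphi(x_j)\in\varphi[U]$, a contradiction. The step I expect to require the most care is precisely this last one: $\varphi$ need not be injective off $A$, so $\varphi[U]$ may a priori contain images of points lying outside $U$, and the argument succeeds only because relative interior in the first countable image $\varphi[X]$ is witnessed by sequences whose $\varphi$-preimages are pinned down near $a$ by the convergence claim.
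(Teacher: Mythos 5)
Your proof is correct, but it takes a genuinely different route from the paper's. The paper proves the lemma by embedding the ambient metrizable space into a power $H_S^\omega$ of the hedgehog (Engelking, Theorem 4.4.9), choosing a countable $C \subseteq S$ of spines sufficient to carry the separable set $A$, and then applying the ``flattening'' map $h_D^\gamma$ that collapses all spines outside $D = C \cup \{\gamma\}$ onto the single spine $\gamma$; the key point is that this map exactly preserves distances $\rho_S^\omega(x,y)$ whenever $x \in A$, after which the separable image in $H_D^\omega$ is embedded homeomorphically into $\mathbb{H}$. You instead build $\varphi$ explicitly as a Kuratowski-type evaluation of scaled distance functions $x \mapsto \bigl(\tfrac1n d(x,a_n)\bigr)_n$ to a countable dense subset of $A$, and you replace the paper's exact isometry-on-$A$ property by the weaker (but sufficient) sequential statement that $\varphi(x_j) \to \varphi(a)$ with $a \in A$ forces $x_j \to a$; the property $(*)$ then follows from first countability of $\varphi[X]$. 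Your verification is sound: the triangle-inequality argument for the convergence claim is correct, and the extraction of a sequence in $\varphi[X]\setminus\varphi[U]$ converging to $\varphi(a)$ is legitimate in the metrizable subspace $\varphi[X]$. What your approach buys is self-containedness and economy --- no universality theorem for hedgehogs, no flattening machinery, only a bounded metric and a countable dense set; what the paper's approach buys is a stronger structural conclusion (distances from points of $A$ are literally preserved, not just detected sequentially), which makes the interior property immediate and keeps the construction within the hedgehog framework the section is built around. Both correctly handle the evident typographical slips in the statement ($Y$ should be $X$ and $f$ should be $\varphi$).
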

\begin{proof}
We can suppose that $X \subseteq H_S^\omega$ for some uncountable set $S$ \cite[Theorem 4.4.9]{Eng}.
Since $A$ is separable, we can choose a countable subset $C \subseteq S$ such that $A \subseteq H_C^\omega$.
Choose any $\gamma \in S \setminus C$ and denote $D = C \cup \{\gamma\}$.
Now for all $x \in X$ define $h(x) = \{h_D^\gamma(x(n))\}_{n \in \omega}$, so $h : X \to H_D^\omega$.
It follows from the previous propostion that $\rho_S^\omega(h(x), h(y)) = \rho_S^\omega(x, y)$ for all $x \in A$ and $y \in X$.
Finally, we take any homeomorphic embedding $g : H_D \to \mathbb{H}$ and define $\varphi(x) = g(h(x))$ for all $x \in X$.
The function $\varphi$ is as required.
\end{proof}

\begin{lemma}
\label{lemma_func_flattening}
Let $X$ be a space, $Y$ be a metrizable space and suppose we are given a function $f : X \to Y$ such that its image $A=f[X]$ is separable.
Take a function $\varphi : Y \to \mathbb{H}$ with the properties from Lemma \ref{lemma_flattening}.
Then for every family $P$ of functions $X \to Y$ the following conditions are equivalent:
\begin{enumerate}
\item[(1)] $f \in \overline{P}$;

\item[(2)] $f \circ \varphi \in \overline{\{h \circ \varphi : h \in P\}}$.
\end{enumerate}
\end{lemma}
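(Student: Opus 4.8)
The plan is to read both conditions as statements about the post-composition operator $\Phi : Y^X \to \mathbb{H}^X$ defined by $\Phi(g) = \varphi \circ g$ (this is the composite appearing in the statement), where both function spaces carry the topology of pointwise convergence, and to show that $\Phi$ both preserves and reflects the closure relation at the point $f$. For the implication $(1)\Rightarrow(2)$ I would first check that $\Phi$ is continuous: for each $x \in X$ the map $g \mapsto \Phi(g)(x) = \varphi(g(x))$ factors as $\varphi \circ \mathrm{ev}_x$, a composition of the continuous evaluation $\mathrm{ev}_x : Y^X \to Y$ with the continuous $\varphi$, so every coordinate of $\Phi$ is continuous and hence so is $\Phi$. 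A continuous map satisfies $\Phi(\overline P) \subseteq \overline{\Phi(P)}$, which is exactly $f \in \overline P \Rightarrow \Phi(f) \in \overline{\Phi(P)}$. This direction uses nothing about $\varphi$ beyond continuity.

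The substantive direction is $(2)\Rightarrow(1)$, which I would argue neighborhood-wise. Fix a basic neighborhood $W$ of $f$ in $Y^X$, determined by a finite $K \subseteq X$ and open sets $V_x \ni f(x)$ for $x \in K$; the goal is to produce $h \in P$ with $h(x) \in V_x$ for all $x \in K$. The crucial step is to convert each $V_x$ into an $\mathbb{H}$-open neighborhood $\tilde{O}_x$ of $\varphi(f(x))$ whose \emph{full} $\varphi$-preimage is contained in $V_x$, i.e. $\varphi^{-1}[\tilde{O}_x] \subseteq V_x$. Granting this, the set $\tilde{W} = \{\psi \in \mathbb{H}^X : \psi(x) \in \tilde{O}_x \text{ for } x \in K\}$ is a neighborhood of $\Phi(f)$; by $(2)$ there is $h \in P$ with $\Phi(h) \in \tilde{W}$, and then $\varphi(h(x)) \in \tilde{O}_x$ forces $h(x) \in \varphi^{-1}[\tilde{O}_x] \subseteq V_x$, so $h \in W \cap P$ and $f \in \overline P$ follows.

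The main obstacle, and the only place where the construction of $\varphi$ really enters, is producing $\tilde{O}_x$ with $\varphi^{-1}[\tilde{O}_x] \subseteq V_x$. Property $(*)$ by itself gives only $\varphi(f(x)) \in \mathrm{Int}_{\varphi[Y]}\varphi[V_x]$, hence an $\mathbb{H}$-open $\tilde{O}_x$ with $\tilde{O}_x \cap \varphi[Y] \subseteq \varphi[V_x]$; this controls $\varphi[V_x]$ but not $\varphi^{-1}[\varphi[V_x]]$, and since $\varphi$ is genuinely non-injective on $Y$ (it collapses the hedgehog spines outside $D$ onto $\gamma$) one cannot pass from $\varphi(h(x)) \in \varphi[V_x]$ to $h(x) \in V_x$. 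To close this gap I would use the sharper feature of $\varphi$ established inside the proof of Lemma \ref{lemma_flattening}, namely that $\varphi = g \circ \theta$, with $g$ a homeomorphic embedding and $\theta$ the spine-collapsing map, preserves distances measured from $A$: one has $\rho_S^\omega(\theta(a), \theta(y)) = \rho_S^\omega(a, y)$ for every $a \in A$ and $y \in Y$. Choosing $r > 0$ with $\{y \in Y : \rho_S^\omega(f(x), y) < r\} \subseteq V_x$ and invoking continuity of $g^{-1}$ at $\varphi(f(x))$, I obtain an $\mathbb{H}$-open $\tilde{O}_x \ni \varphi(f(x))$ such that every $y$ with $\varphi(y) \in \tilde{O}_x$ satisfies $\rho_S^\omega(\theta(f(x)), \theta(y)) < r$; since $f(x) \in A$ the isometry turns this into $\rho_S^\omega(f(x), y) < r$, whence $y \in V_x$, i.e. $\varphi^{-1}[\tilde{O}_x] \subseteq V_x$. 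Thus it is the distance preservation from $A$, rather than property $(*)$ in isolation, that makes the reflection direction work, and I expect the careful handling of this non-injectivity to be the technical heart of the argument.
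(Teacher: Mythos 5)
Your proposal is correct, and it is worth recording that it does more than the paper's own proof, which consists of exactly two sentences: ``$(1)\to(2)$ follows from continuity of $\varphi$'' and ``$(2)\to(1)$ follows from the property $(*)$.'' Your first direction coincides with the paper's. But for $(2)\to(1)$ you diverge in a substantive way: you observe that property $(*)$, as literally stated, only yields an $\mathbb{H}$-open $\tilde{O}_x \ni \varphi(f(x))$ with $\tilde{O}_x \cap \varphi[Y] \subseteq \varphi[V_x]$, which controls the image $\varphi[V_x]$ but not the full preimage $\varphi^{-1}[\varphi[V_x]]$, and you then import the sharper fact from the construction of $\varphi$ --- namely $\varphi = g \circ \theta$ with $g$ an embedding and $\theta$ satisfying $\rho_S^\omega(\theta(a),\theta(y)) = \rho_S^\omega(a,y)$ for all $a \in A$, $y \in Y$ --- to manufacture $\tilde{O}_x$ with $\varphi^{-1}[\tilde{O}_x] \subseteq V_x$. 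This repair is not pedantry: $(*)$ alone really is insufficient. Take $Y$ a two-point discrete space $\{y_1,y_2\}$, $A = \{y_1\}$, and $\varphi$ constant; then $\varphi$ is continuous and satisfies $(*)$ vacuously (its image is a singleton, so every relative interior computation is trivial), yet for $X$ a singleton, $f \equiv y_1$ and $P = \{h\}$ with $h \equiv y_2$, condition $(2)$ holds while $(1)$ fails. So the lemma is false for an arbitrary continuous $\varphi$ satisfying $(*)$; it is true for the specific $\varphi$ built in Lemma \ref{lemma_flattening}, precisely because of the distance-preservation-from-$A$ property you invoke. Your argument is the one the authors presumably had in mind, and your identification of where non-injectivity of $\varphi$ bites (it collapses the hedgehog spines outside the countable set $D$) is exactly the point the paper's citation of $(*)$ glosses over.

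One way to phrase the property your proof actually uses, and which the stated $(*)$ should be strengthened to, is: for every open $U \subseteq Y$, $\varphi[U \cap A] \cap \overline{\varphi[Y \setminus U]} = \emptyset$; equivalently, every point of $\varphi[U\cap A]$ has an $\mathbb{H}$-neighborhood whose full $\varphi$-preimage lies in $U$. This implies $(*)$ but, as the constant-map example shows, is strictly stronger, and it is exactly what the isometry argument delivers. With that formulation your neighborhood-wise verification of $(2)\to(1)$ goes through verbatim and the rest of your write-up checks out in every detail.
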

\begin{proof}
$(1) \to (2)$ follows from continuity of $\varphi$.

$(2) \to (1)$ follows from the property (*).
\end{proof}

\section{Application to types of tightness of spaces of quasicontinuous functions}
\label{section_qcf}

\begin{definition}
Suppose we are given a topological space $X$, a metric space $(Y,d)$, a function $f : X \to Y$ and a familly $P \subseteq Y^X$.

\begin{itemize}

\item For every $r \geq 0$ we write $d(f, P) \leq r$ if for every finite $K \subseteq X$ there is a function $h \in P$ such that $d(f(x), h(x)) \leq r$ for all $x \in K$.

\item We denote $\textbf{d}(f,P) = \inf\{r \geq 0 : d(f,P) \leq r\}$.

\end{itemize}
\end{definition}

It is obvious that $f \in \overline{P}$ if and only if $\textbf{d}(f, P) = 0$.

\subsection{Tightness}

\begin{lemma}\label{lem6.2}
Let $X$ be an open
Whyburn topological space and $(Y,d)$ be metric space, $f\in C(X,Y)$ and suppose that $X$ is $\mathcal{K}_\Omega$-Lindel{\"o}f.
Then for every $P \subseteq Q(X,Y)$ such that $\textbf{d}(f,P)\leq \varepsilon$ there is a subset $S \in [P]^{\leq \omega}$ such that $\textbf{d}(f,S)\leq \varepsilon$.
\end{lemma}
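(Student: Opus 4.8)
The plan is to fix a rational $\delta>0$, reduce the statement to producing for this $\delta$ a countable $S_\delta\subseteq P$ such that every finite $K\subseteq X$ lies in $G_{h,\delta}:=\{x:d(f(x),h(x))<\varepsilon+\delta\}$ for some $h\in S_\delta$, and then to set $S=\bigcup_{n}S_{1/n}$. First I would record that, since $f$ is continuous and $h$ is quasicontinuous, the real-valued function $\psi_h(x)=d(f(x),h(x))$ is itself quasicontinuous: given $x$ and $\eta>0$, continuity of $f$ shrinks a neighbourhood of $x$ so that $d(f(x'),f(x))<\eta/2$ on it, and quasicontinuity of $h$ produces inside that neighbourhood a nonempty open $W$ with $d(h(x'),h(x))<\eta/2$, whence $|\psi_h(x')-\psi_h(x)|<\eta$ on $W$. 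Consequently $G_{h,\delta}=\psi_h^{-1}([0,\varepsilon+\delta))$ is the preimage of an open set under a quasicontinuous map, hence semi-open (quasi-open), so $G_{h,\delta}\subseteq\overline{\mathrm{int}\,G_{h,\delta}}$. From $\mathbf{d}(f,P)\le\varepsilon$ it follows that for every finite $F$ there is $h\in P$ with $d(f,h)\le\varepsilon+\delta/2$ on $F$, i.e. $F\subseteq G_{h,\delta}$; thus $\{G_{h,\delta}:h\in P\}$ is a semi-open $\omega$-cover of $X$.

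Next I would pass to the open sets $U_{h,\delta}=\mathrm{int}\,G_{h,\delta}$. Semi-openness gives $F\subseteq G_{h,\delta}\subseteq\overline{U_{h,\delta}}$, so for each finite $F$ some $\overline{U_{h,\delta}}$ contains $F$, and taking singletons $X=\bigcup_h\overline{U_{h,\delta}}$; after discarding the (harmlessly treatable) functions for which $U_{h,\delta}$ happens to be dense, the family $\{U_{h,\delta}:h\in P\}$ lies in $\mathcal{K}_\Omega$. Applying $\mathcal{K}_\Omega$-Lindel\"{o}fness of $X$ then yields a countable subfamily $\{U_{h_i,\delta}:i\in\omega\}$ still in $\mathcal{K}_\Omega$; in particular every finite $F$ satisfies $F\subseteq\overline{U_{h_i,\delta}}$ for some $i$. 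I put $S_\delta=\{h_i:i\in\omega\}$.

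The main obstacle is the passage from this closure-type $\omega$-cover back to a genuine $\omega$-cover by the good sets $G_{h_i,\delta}$: knowing $F\subseteq\overline{U_{h_i,\delta}}$ does not by itself give $d(f,h_i)\le\varepsilon+\delta$ on $F$, since a quasicontinuous $h_i$ may be far from $f$ at a boundary point of its good region (a simple step function already exhibits this). This is exactly where the open Whyburn hypothesis must enter. For a point $x\in K$ lying in $\overline{U_{h_i,\delta}}\setminus G_{h_i,\delta}$ I would apply the open Whyburn property to the open set $U_{h_i,\delta}$ and the point $x$ to obtain an open $B\subseteq U_{h_i,\delta}$ with $\overline{B}\setminus U_{h_i,\delta}=\{x\}$; thus $B$ converges cleanly to $x$, and $d(f,h_i)<\varepsilon+\delta$ holds on $B$, while continuity of $f$ forces $f$ to be nearly constant near $x$ and quasicontinuity of $\psi_{h_i}$ forbids isolated bad values. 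Combining the clean convergence of $B$ with these two facts is what should let one either certify the approximation at the genuine points of $K$, or else see that the $\mathcal{K}_\Omega$-selection has already retained another member of $S_\delta$ that is good at $x$, so that $\{G_{h_i,\delta}:i\in\omega\}$ is indeed an $\omega$-cover. I expect this certification step, reconciling the closure-based selection with honest pointwise approximation, to be the hard and technical heart of the argument.

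Finally, with each $S_{1/n}$ in hand I would set $S=\bigcup_{n\in\omega}S_{1/n}\in[P]^{\le\omega}$. Given a finite $K$ and a tolerance $\delta'>0$, choosing $n$ with $1/n<\delta'$ and applying the $\omega$-cover property of $\{G_{h,1/n}:h\in S_{1/n}\}$ yields $h\in S$ with $d(f(x),h(x))<\varepsilon+\delta'$ for all $x\in K$; since $K$ and $\delta'$ were arbitrary this gives $\mathbf{d}(f,S)\le\varepsilon$, as required.
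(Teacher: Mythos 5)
Your opening steps (quasicontinuity of $\psi_h(x)=d(f(x),h(x))$, hence semi-openness of $G_{h,\delta}$, and the fact that these sets form a semi-open $\omega$-cover) match the paper, and your final paragraph assembling $S=\bigcup_n S_{1/n}$ is fine. But the step you yourself flag as ``the hard and technical heart'' is a genuine gap, and it cannot be filled in the order of operations you chose. You apply the $\mathcal{K}_\Omega$-Lindel\"{o}f selection to the family of interiors $\{U_{h,\delta}\}$, so the only information the selection retains about a chosen $h_i$ is the set $\overline{U_{h_i,\delta}}$. Membership $K\subseteq \overline{U_{h_i,\delta}}$ says nothing about the values of $h_i$ at points of $K\setminus G_{h_i,\delta}$ --- your own step-function example shows this --- and the proposed repair is unsound on both branches: quasicontinuity of $\psi_{h_i}$ does \emph{not} forbid a bad value at a boundary point of the good region (the step function is quasicontinuous and is bad exactly there), and the alternative branch (``the selection has already retained another member of $S_\delta$ that is good at $x$'') is precisely the statement to be proved; nothing in the Whyburn property, invoked after the selection has been made, produces it. Since the selection criterion never saw the values of the functions on the boundaries $\overline{U_{h,\delta}}\setminus G_{h,\delta}$, there is no reason any member of the selected countable family is good at such an $x$.

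The paper resolves this by applying the open Whyburn property \emph{before} the Lindel\"{o}f selection and by indexing the cover by pairs (function, finite set). For each $h\in P$, each $n$, and each finite $K\subseteq V_{h,n}:=\{x: d(f(x),h(x))<\varepsilon+\tfrac1n\}$, the Whyburn property (at points of $K$ outside $\mathrm{Int}\,V_{h,n}$, together with regularity at the interior points) yields an open set $W_{K,h,n}$ with $K\subseteq \overline{W_{K,h,n}}\subseteq V_{h,n}$. The family $\{W_{K,h,n}: K\in [V_{h,n}]^{<\omega},\ h\in P\}$ lies in $\mathcal{K}_\Omega$, because every finite subset of $X$ lies in some $V_{h,n}$ and hence in the closure of the corresponding $W_{K,h,n}$. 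Now a countable subfamily in $\mathcal{K}_\Omega$ certifies approximation automatically: if $K\subseteq \overline{W_{K_i,h_i,n}}$ then $K\subseteq V_{h_i,n}$, i.e. $d(f(x),h_i(x))<\varepsilon+\tfrac1n$ for all $x\in K$, and collecting the selected functions over all $n$ gives the desired $S$. In short, the Whyburn shrinking must be built into the members of the $\mathcal{K}_\Omega$ family so that closure-membership is itself the certificate of closeness; used after the selection, it cannot recover what the selection discarded.
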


\begin{proof} For every $h\in P$ and $n\in \mathbb{N}$ denote by $V_{h,n}=\{x\in X: d(f(x),h(x))<\varepsilon+\frac{1}{n}\}$. Note that $V_{h,n}$ is semi-open because $h\in Q(X,Y)$ and $V_{h,n}=\bigcup \{h^{-1}(\{y\in Y: d(f(x),y)<\varepsilon+\frac{1}{n}\}): x\in X\}$ (see Theorem 2 in \cite{Lev}).

Since $\textbf{d}(f,P)\leq \varepsilon$, $\mathcal{V}_n=\{V_{h,n}: h\in P\}$ is a semi-open $\omega$-cover of $X$ for every $n\in \mathbb{N}$.

Since $X$ is an open
Whyburn topological space, for every  $V_{h,n}$ and a finite subset $K$ of $V_{h,n}$ we can consider an open set $W_{K,h,n}$ such that $K\subset \overline{W_{K,h,n}}\subset V_{h,n}$.
Then the family $\{W_{K,h,n}: K\in [V_{h,n}]^{<\omega}$ and $h\in P\}\in \mathcal{K}_{\Omega}$ for every $n\in \mathbb{N}$.
For each $n\in \mathbb{N}$ we apply that $X$ is $\mathcal{K}_\Omega$-Lindel{\"o}f and we get the countable family $\{W_{K_i(n),h_i(n),n}: i\in \mathbb{N}\}\in \mathcal{K}_{\Omega}$.
It remains to note that $S=\{h_i(n): i,n\in \mathbb{N}\}$ such that $S\in [P]^{\leq \omega}$ and $\textbf{d}(f,S)\leq \varepsilon$.

\end{proof}

\begin{lemma}\label{lem5} Let $X$ be a metrizable $\mathcal{K}_\Omega$-Lindel{\"o}f space and $F$ be a closed subset of $X$. Then $F$ is $\mathcal{K}_\Omega$-Lindel{\"o}f.
\end{lemma}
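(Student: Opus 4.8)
The plan is to prove the closed-subspace heredity by lifting a witnessing family from $F$ to $X$, applying the $\mathcal{K}_\Omega$-Lindel\"of property of $X$, and then restricting the resulting countable subfamily back to $F$. First I would record a reduction: a family $\mathcal{U}$ of open subsets of $F$ lies in $\mathcal{K}_\Omega(F)$ exactly when no member is dense in $F$ and $\{\overline{U}^{F}:U\in\mathcal{U}\}$ is an $\omega$-cover of $F$ by closures (the covering condition $F=\bigcup\overline{U}^{F}$ being then automatic, as the singleton case). Since any subfamily of $\mathcal{U}$ again has no dense member, it suffices to extract from an arbitrary $\mathcal{U}\in\mathcal{K}_\Omega(F)$ a countable subfamily $\mathcal{U}'$ for which $\{\overline{U}^{F}:U\in\mathcal{U}'\}$ still captures every finite subset of $F$ inside a single closure.

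Fix a metric $d$ on $X$ and write $W=X\setminus F$, which is open. The lifting step is the heart of the argument. For each $U\in\mathcal{U}$ I would choose an open set $G_U\subseteq X$ with $G_U\cap F=U$ and, crucially, $\overline{G_U}^{X}\cap F\subseteq\overline{U}^{F}$; this is obtained by setting $G_U=\bigcup_{x\in U}B\bigl(x,\tfrac13 d(x,F\setminus U)\bigr)$ and checking, via the shrinking radii, that no point of $F\setminus\overline{U}^{F}$ can be a limit of $G_U$, while the inclusion $\overline{U}^{F}\subseteq\overline{G_U}^{X}$ holds automatically. Simultaneously I would exhaust $W$ by the countably many open sets $W_n=\{x\in X:d(x,F)>1/n\}$, which increase to $W$ and satisfy $\overline{W_n}^{X}\cap F=\emptyset$ and $\bigcup_n\overline{W_n}^{X}=W$. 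Finally I form the (countable-over-$n$) family $\hat{\mathcal{U}}=\{\,G_U\cup W_n:U\in\mathcal{U},\,n\in\mathbb{N}\,\}$ of open subsets of $X$.

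The verification that $\hat{\mathcal{U}}\in\mathcal{K}_\Omega(X)$ then splits along $F$ and $W$. Its closures cover $X$ because $\bigcup_U\overline{G_U}^{X}\supseteq\bigcup_U\overline{U}^{F}=F$ and $\bigcup_n\overline{W_n}^{X}=W$; no member $G_U\cup W_n$ is dense, since its closure meets $F$ only in $\overline{U}^{F}\neq F$ (here the non-density of $U$ in $F$ is used); and it is an $\omega$-cover by closures because an arbitrary finite $K\subseteq X$ splits as $K_F=K\cap F$ and $K_W=K\cap W$, where a single $W_n$ absorbs $K_W$ (the $W_n$ increase) and a single $\overline{U}^{F}$ absorbs $K_F$ (as $\mathcal{U}\in\mathcal{K}_\Omega(F)$), so $K\subseteq\overline{G_U\cup W_n}^{X}$. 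Applying that $X$ is $\mathcal{K}_\Omega$-Lindel\"of yields a countable subfamily of $\hat{\mathcal{U}}$ in $\mathcal{K}_\Omega(X)$; let $\mathcal{U}'\subseteq\mathcal{U}$ be the (countable) set of first coordinates occurring in it. For a finite $K\subseteq F$ some chosen $G_U\cup W_n$ has $K\subseteq\overline{G_U\cup W_n}^{X}$, and since $\overline{W_n}^{X}\cap F=\emptyset$ this forces $K\subseteq\overline{G_U}^{X}\cap F\subseteq\overline{U}^{F}$ with $U\in\mathcal{U}'$; thus $\mathcal{U}'$ witnesses $\mathcal{K}_\Omega(F)$.

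I expect the main obstacle to be the lift $G_U$ with the one-sided closure control $\overline{G_U}^{X}\cap F\subseteq\overline{U}^{F}$: one must prevent the open thickening of $U$ from accumulating at points of $F$ outside $\overline{U}^{F}$, and it is precisely here that metrizability (not mere normality) is essential, both for the radius-shrinking estimate and for the clean exhaustion of $W$ by sets whose closures avoid $F$. The remaining delicacy is purely bookkeeping: the gluing $G_U\cup W_n$ must be arranged so that a single member simultaneously captures the $F$-part and the $W$-part of any finite set, which is why $W$ is absorbed through the increasing sequence $(W_n)$ rather than all at once.
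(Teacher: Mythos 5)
Your proof is correct, but it takes a genuinely different route from the paper's. The paper argues structurally, leaning on previously established results: since a metrizable $\mathcal{K}_\Omega$-Lindel\"{o}f space is a Lusin space (Lemma 3.1 in \cite{Osipov}), every nowhere dense subset of $X$ is countable; hence if $\mathrm{Int}\,F=\emptyset$ then $F$ itself is countable and trivially $\mathcal{K}_\Omega$-Lindel\"{o}f, while if $\mathrm{Int}\,F\neq\emptyset$ one writes $F=\mathrm{Int}\,F\cup(F\setminus \mathrm{Int}\,F)$ and invokes Theorem 3.6 of \cite{Osipov} twice (open subspaces inherit the property; the union with a countable set preserves it). You instead work directly from the definition, lifting a witnessing family from $F$ to $X$, and your construction does go through: the thickening $G_U=\bigcup_{x\in U}B\bigl(x,\tfrac13 d(x,F\setminus U)\bigr)$ indeed satisfies $G_U\cap F=U$ and $\overline{G_U}\cap F\subseteq\overline{U}^{F}$ (if $y_k\to z\in F\setminus\overline{U}^{F}$ with $y_k\in B\bigl(x_k,\tfrac13 d(x_k,F\setminus U)\bigr)$, then $d(x_k,z)\leq d(x_k,y_k)+d(y_k,z)<\tfrac13 d(x_k,z)+d(y_k,z)$ forces $x_k\to z$, a contradiction); note that $F\setminus U\neq\emptyset$ precisely because $U$ is not dense in $F$, so the radii are well defined. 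The sets $W_n$ handle the complement, non-density of each $G_U\cup W_n$ holds because its closure meets $F$ only inside $\overline{U}^{F}\neq F$, and the restriction step works because $\overline{W_n}\cap F=\emptyset$; only the degenerate cases $F=\emptyset$ and $F=X$ deserve a passing word. The trade-off: your argument is self-contained, using nothing beyond metrizability and the bare definition of $\mathcal{K}_\Omega$, at the cost of the explicit lifting construction; the paper's proof is only a few lines because it exploits the Lusin-space structure theory and the hereditary properties already available in \cite{Osipov}, and it situates the lemma within that framework rather than re-deriving covering facts from scratch.
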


\begin{proof} (1) Assume that $Int F=\emptyset$. Since $X$ is Lusin, $F$ is countable and, hence, $F$ is $\mathcal{K}_\Omega$-Lindel{\"o}f.

(2) Let $Int F\neq\emptyset$. By Theorem 3.6 ($(2)\Rightarrow (5)(a)$) in \cite{Osipov} (Any open subset of $\mathcal{K}_\Omega$-Lindel{\"o}f space is  $\mathcal{K}_\Omega$-Lindel{\"o}f),  $Int F$ is $\mathcal{K}_\Omega$-Lindel{\"o}f and, by Theorem 3.6 ($(2)\Rightarrow (5)(b)$) (The union of $\mathcal{K}_\Omega$-Lindel{\"o}f space with countable space is $\mathcal{K}_\Omega$-Lindel{\"o}f), $F=Int F\cup(F\setminus Int F)$ is $\mathcal{K}_\Omega$-Lindel{\"o}f.

\end{proof}

\begin{theorem}\label{th21}
For every metrizable space $X$ the following conditions are equivalent:
\begin{enumerate}
\item $t(Q_p(X,Y)) = \omega$ for all metrizable spaces $Y$;

\item There is a metrizable space $Y$ and a function $f \in Q_p(X, Y)$ such that $t(f, Q_p(X,Y))=\omega$;

\item There is a function $f \in Q_p(X, \mathbb{R})$ such that $t(f, Q_p(X,\mathbb{R}))=\omega$;

\item $X$ is $\mathcal{K}_\Omega$-Lindel{\"o}f.

\end{enumerate}
\end{theorem}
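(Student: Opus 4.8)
The plan is to prove the cycle $(4)\Rightarrow(1)\Rightarrow(3)\Rightarrow(2)\Rightarrow(4)$. Two of these arrows are immediate: taking $Y=\mathbb{R}$ shows $(1)\Rightarrow(3)$ (if $t(Q_p(X,\mathbb{R}))=\omega$ then any fixed $f\in Q_p(X,\mathbb{R})$ witnesses $(3)$), and $(3)\Rightarrow(2)$ holds because $\mathbb{R}$ is metrizable. All the content sits in the two long arrows $(4)\Rightarrow(1)$ and $(2)\Rightarrow(4)$.

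For $(4)\Rightarrow(1)$ I fix a metrizable $Y$, a function $f\in Q(X,Y)$ and a set $A\subseteq Q(X,Y)$ with $f\in\overline{A}$, and I must extract a countable $B\subseteq A$ with $f\in\overline{B}$. First I would note that a metrizable $\mathcal{K}_\Omega$-Lindel\"of space is a Lusin space, hence hereditarily Lindel\"of and therefore separable and ccc. Quasicontinuity of $f$ makes the preimages of disjoint balls nonempty semi-open sets, so they have nonempty interiors; ccc then rules out an uncountable $\varepsilon$-separated subset of $f[X]$, so $f[X]$ is separable. This lets me apply Lemmas \ref{lemma_flattening} and \ref{lemma_func_flattening}: composing with the flattening map $\varphi$ turns ``$f\in\overline{A}$'' into the same assertion for $\varphi\circ f\in Q(X,\mathbb{H})$, and a countable witness there pulls back to one for $A$, so I may assume $Y=\mathbb{H}$ and read $f\in\overline{A}$ as $\textbf{d}(f,A)=0$. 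Next I would take a $\mathrm{DA}$-family $\{E_k:k\in\omega\}$ for $f$ from Corollary \ref{cor3.5}. For each $k$ and each $m$ with $osc(f|_{E_k})<\frac1m$, the Hilbert cube approximation Proposition produces a continuous map on the (normal) closed set $E_k$ within $\frac{1}{2m}$ of $f$, which I extend coordinatewise by Tietze to a $g_{k,m}\in C(X,\mathbb{H})$ still within $\frac{1}{2m}$ of $f$ on all of $E_k$. Since $g_{k,m}$ is continuous and each $h\in A$ is quasicontinuous, $x\mapsto d_\mathbb{H}(g_{k,m}(x),h(x))$ is quasicontinuous on $X$, so $V_h=\{x\in X:d_\mathbb{H}(g_{k,m}(x),h(x))<\frac1m\}$ is semi-open in $X$, and $\{V_h:h\in A\}$ is a semi-open $\omega$-cover of $E_k$. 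Because $E_k$ is closed, hence $\mathcal{K}_\Omega$-Lindel\"of by Lemma \ref{lem5} and open Whyburn (being metrizable), I would run the extraction of Lemma \ref{lem6.2} relative to $E_k$ to thin $\{V_h\}$ to a countable subfamily still $\omega$-covering $E_k$; the underlying functions form a countable $B_{k,m}\subseteq A$ approximating $f$ within $\frac{3}{2m}$ on every finite subset of $E_k$. Finally $B=\bigcup_{k,m}B_{k,m}$ works: given a finite $K$ and a target $\eta$, choose $m$ with $\frac{3}{2m}<\eta$ and, by the defining property of the $\mathrm{DA}$-family, an $E_k\supseteq K$ with $osc(f|_{E_k})<\frac1m$, whence some member of $B_{k,m}$ is within $\eta$ of $f$ on $K$; thus $\textbf{d}(f,B)=0$.

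For $(2)\Rightarrow(4)$ I would argue by contraposition. Assuming $X$ is not $\mathcal{K}_\Omega$-Lindel\"of, fix $\mathcal{U}\in\mathcal{K}_\Omega$ with no countable subfamily in $\mathcal{K}_\Omega$, so that every countable $\{U_i\}\subseteq\mathcal{U}$ admits a finite set $F$ with $F\not\subseteq\overline{U_i}$ for all $i$. I claim that then \emph{every} $f\in Q(X,Y)$ (for every metrizable $Y$ with at least two points) has uncountable tightness. Choosing two points of $Y$ at distance $2\delta>0$, for each $U\in\mathcal{U}$ I build $h_U\in Q(X,Y)$ that equals $f$ on $\overline{U}$ and is displaced by at least $\delta$ from $f$ on the open set $X\setminus\overline{U}$. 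The property ``for every finite $K$ there is $U$ with $K\subseteq\overline{U}$'' gives $f\in\overline{\{h_U:U\in\mathcal{U}\}}$, while for a countable subfamily $\{h_{U_i}\}$ the associated finite $F$ contains, for each $i$, a point of $X\setminus\overline{U_i}$ at which $h_{U_i}$ is $\delta$-far from $f$; hence no $h_{U_i}$ lies in the basic neighbourhood of $f$ determined by $F$ and $\delta$, so $f$ is not in the closure of any countable subfamily.

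I expect the main obstacle to be the relativization in $(4)\Rightarrow(1)$. Lemma \ref{lem6.2} is proved for the ambient space with a continuous target and quasicontinuous competitors, whereas I need its conclusion on the closed subspace $E_k$, and restrictions of quasicontinuous functions to closed sets need not be quasicontinuous. Keeping $g_{k,m}$ continuous on all of $X$ is precisely what makes each $V_h$ genuinely semi-open in $X$, so the open-Whyburn-to-$\mathcal{K}_\Omega$ passage has to be performed for the traces on $E_k$ of sets semi-open in $X$, and the delicate point is that such a trace still yields a $\mathcal{K}_\Omega$-family for the subspace $E_k$. To control this I would split $E_k$ into its interior $\mathrm{Int}\,E_k$, an open (hence, by \cite{Osipov}, $\mathcal{K}_\Omega$-Lindel\"of) subspace on which restrictions of quasicontinuous functions remain quasicontinuous so that Lemma \ref{lem6.2} applies directly, and the boundary $\partial E_k=E_k\setminus\mathrm{Int}\,E_k$, which is closed with empty interior and therefore nowhere dense; since $X$ is Lusin this boundary is countable, and the countably many boundary points should be absorbable into the countable family $B$. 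A secondary difficulty, in $(2)\Rightarrow(4)$, is checking quasicontinuity of $h_U$ at the boundary points of $\overline{U}$, where the value $f(x)$ must be approached through the open set $U$ on which $h_U=f$.
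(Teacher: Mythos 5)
Your cycle $(4)\Rightarrow(1)\Rightarrow(3)\Rightarrow(2)\Rightarrow(4)$ is a legitimate reorganization, and your argument for $(4)\Rightarrow(1)$ follows the paper's route: reduce to $Y=\mathbb{H}$ via the flattening Lemma \ref{lemma_func_flattening}, take a $\mathrm{DA}$-family (Corollary \ref{cor3.5}), approximate $f$ by continuous maps on its members, and extract countable subfamilies by the open Whyburn/$\mathcal{K}_\Omega$-Lindel\"of mechanism of Lemma \ref{lem6.2}. Your handling of the relativization to the closed sets $E_k$ (extending the continuous approximant over all of $X$ by Tietze so that each $V_h$ is semi-open in $X$, then splitting $E_k$ into $\mathrm{Int}\,E_k$, where restrictions of quasicontinuous maps stay quasicontinuous and Lemma \ref{lem6.2} applies directly, and the boundary, which is countable since $X$ is Lusin) is in fact more careful than the paper, which applies Lemma \ref{lem6.2} to $P\upharpoonright F$ even though restrictions of quasicontinuous functions to closed sets need not be quasicontinuous. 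Your ``absorption'' of the boundary is left vague, but it is completable, e.g.\ by running the extraction separately inside each subfamily $A_L=\{h\in A: d(f(x),h(x))<\tfrac{1}{2m}\ \text{for all } x\in L\}$, where $L$ ranges over the countably many finite subsets of the boundary.

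The genuine gap is in $(2)\Rightarrow(4)$. The functions $h_U$ you propose (equal to $f$ on $\overline{U}$, $\delta$-displaced from $f$ on $X\setminus\overline{U}$) need not be quasicontinuous, and the difficulty you call secondary is fatal: quasicontinuity of $f$ at $x\in\overline{U}\setminus U$ produces witnessing open sets somewhere in every neighbourhood of $x$, but possibly never inside $U$, so there is no reason $f(x)$ can be ``approached through $U$''. Concretely, take $X=Y=\mathbb{R}$, $f=\chi_{[0,\infty)}$ (quasicontinuous), $U=(-1,0)$, $x=0$: here $f\equiv 0$ on $U$ while $f(0)=1$, so any function agreeing with $f$ on $[-1,0]$ and $\delta$-far from $f$ off $[-1,0]$ maps no nonempty open subset of $(-\varepsilon,\varepsilon)$ into the ball of radius $\min(\delta,1)$ about $1$; it is not quasicontinuous at $0$ no matter how the displacement is chosen. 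A second defect is the displaced part itself: to keep quasicontinuity on $X\setminus\overline{U}$ one essentially needs a constant value there, and a constant uniformly $\delta$-far from $f$ need not exist (e.g.\ when $f$ is surjective or has dense image). This is precisely why the paper's functions $f_{K,\alpha}$ are piecewise constant --- equal to the single value $f(x_i)$ (not to $f$) on closures $\overline{W_i}$ of open sets chosen by open Whyburn around the points of a finite set $K$, and equal to a flag value $y\notin f(X)$ on the open complement --- and why the separation uses the value-avoidance neighbourhood $[K,Y\setminus\{y\}]$ rather than a metric margin; moreover, when $f(X)=Y$ no flag value exists, and the paper needs a separate, substantially harder argument (producing disjoint open $U,V\subseteq Y$ with $\overline{\mathrm{Int}f^{-1}(U)\cup \mathrm{Int}f^{-1}(V)}=X$ via a $2^{\mathfrak{c}}$-embedding that exploits countable tightness, followed by a two-flag construction). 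Your sketch engages with neither point, so the implication $(2)\Rightarrow(4)$ --- and with it the whole equivalence --- remains unproven.
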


\begin{proof} $(1)\Rightarrow(2)$ and $(1)\Rightarrow(3)$  are trivial.

$(2)\Rightarrow(4)$.  (i) Suppose that $t(f, Q_p(X,Y))=\omega$ for some $f \in Q_p(X, Y)$, $f(X)\neq Y$ where $Y$ is metrizable and $|Y|>1$.
Let a family $\gamma=\{V_{\alpha}\}$ of open subsets of $X$ such that $\gamma\in \mathcal{K}_\Omega$. Let $y\in Y\setminus f(X)$.

For every $K\in [X]^{<\omega}$ there is $V_{\alpha}\in \gamma$ such that $K=\{x_1,...,x_k\}\subset \overline{V_{\alpha}}$.
Since $X$ is metrizable (hence, it is open
Whyburn), there are open subsets $W_1$,...,$W_k$ of $X$ such that $x_i\in \overline{W_i}\subset V_{\alpha}\cup\{x_i\}$  for $i=1,...,k$, $\overline{W_i}\cap \overline{W_j}=\emptyset$ for $i\neq j$.
Consider the function

 $$ f_{K,\alpha}:=\left\{
\begin{array}{lcl}
f(x_i) \, \, \, \, \, \, \, \, \, \, on \, \, \, \, \overline{W_i};\\
y \, \, \, \, \, \, \, \, on \, \, \, X\setminus (\bigcup\{\overline{W_i}: i=1,...,k\}).\\
\end{array}
\right.
$$

Let $F=\{f_{K,\alpha}: K\in [X]^{<\omega}\}$. It is clear that $f\in \overline{F}$. Then there is a countably subset $S=\{f_{K_i,\alpha_i}: i\in \mathbb{N}\}$ of $F$ such that $f\in \overline{S}$.

Claim that $\{V_{\alpha_i}: i\in \mathbb{N}\}\in \mathcal{K}_\Omega$. Let $K\in [X]^{<\omega}$. Consider the open neighborhood $[K,Y\setminus\{y\}]:=\{h\in Q_p(X,Y): h(x)\in Y\setminus\{y\}$ for every $x\in K\}$ of the point $f$ in the space $Q_p(X,Y)$. Then, there is $f_{K_i,\alpha_i}\in S\cap [K,Y\setminus\{y\}]$. It follows that $K\subset \overline{V_{\alpha_i}}$ and, hence, $\{V_{\alpha_i}: i\in \mathbb{N}\}\in \mathcal{K}_\Omega$.

(ii) Suppose that $t(f, Q_p(X,Y))=\omega$ for some $f \in Q_p(X, Y)$, $f(X)=Y$ where $Y$ is a metrizable space with a metric $d$.

\medskip
(1) Claim that there are non-empty open disjoint subsets $U$ and $V$ of $Y$ such that
$\overline{Int f^{-1}(U)\cup Int f^{-1}(V)}=X$.

Choose $p,q\in Y$ such that $p\neq q$ and for each $\varepsilon\in (0, d(p,q))$ we consider $U_{\varepsilon}=O_{\varepsilon}(p)$ and $V_{\varepsilon}=X\setminus \overline{O_{\varepsilon}(p)}$. It is clear that
$U_{\varepsilon}\cap V_{\varepsilon}=\emptyset$. Note that if $\overline{Int f^{-1}(U_{\varepsilon})\cup Int f^{-1}(V_{\varepsilon})}\neq X$ then $Int(f^{-1}[Fr O_{\varepsilon}(p)])\neq \emptyset$.
Let us show that for all $\varepsilon$ such a situation cannot exist.

Let's assume the opposite. For each $\varepsilon\in (0, d(p,q))$ we choose a point $x_{\varepsilon}\in Int(f^{-1}[Fr O_{\varepsilon}(p)])$ and a neighborhood $W_{\varepsilon}$ of $x_{\varepsilon}$ such that $\overline{W_{\varepsilon}}\subset Int(f^{-1}[Fr O_{\varepsilon}(p)])$. For each $E\subset (0, d(p,q))$ we define
the function $h_E$:

 $$ h_E:=\left\{
\begin{array}{lcl}
p \, \, \, \, \, \, \, \, \, \, on \, \, \, \, \bigcup\{\overline{W_{\varepsilon}}: \varepsilon\in E\};\\
f \, \, \, \, \, \, \, \, on \, \, \, X\setminus (\bigcup\{\overline{W_{\varepsilon}}: \varepsilon\in E\}).\\
\end{array}
\right.
$$

Let $Z=\{h_E: E\subset (0, d(p,q))\}$. Note that $Z\subset Q(X,Y)$ and $f=h_{\emptyset}\in Z$. It is clear that $Z$ is homeomorphic to $2^{\mathfrak{c}}$ and, hence, $t(f, Q_p(X,Y))> \omega$.

(2) Let $R=Y\setminus (U\cup V)$ where $U$ and $V$ from (1). For each $x\in f^{-1}(R)$ we choose $W_x\in \{U,V\}$ such that $f\upharpoonright (f^{-1}[W_x]\cup \{x\})$ is quasicontinuous. Let $A=f^{-1}(U)\cup \{x\in f^{-1}(R): W_x=U\}$ and $B=f^{-1}(V)\cup \{x\in f^{-1}(R): W_x=V\}$. Thus, $X=A\sqcup B$ and $f\upharpoonright A$ and $f\upharpoonright B$ are quasicontinuous.

Let $p\in U$ and $q\in V$.

(3) Let a family $\gamma=\{W_{\alpha}\}$ of open subsets of $X$ such that $\gamma\in \mathcal{K}_\Omega$.

We can represent each set $W_{\alpha}$ as a finite disjoint union $W^1_{\alpha}\sqcup ...\sqcup W^n_{\alpha}$ of open sets $W^j_{\alpha}$  where $W^j_{\alpha}\subset Int f^{-1}(U)$ or $W^j_{\alpha}\subset Int f^{-1}(V)$ for each $j=1,...,n$ and $|\overline{W^j_{\alpha}}\cap Fr A|\leq 1$. Note that since $X$ is an open
Whyburn space, we can assume this property.

For each $W_{\alpha}=W^1_{\alpha}\sqcup ...\sqcup W^n_{\alpha}\in \gamma$ and a finite subset $K=\{x_1,...,x_n\}$ of $X$ where $x_j\in \overline{W^j_{\alpha}}$, $j=1,...,n$, we define the function $h_{W_{\alpha}, K}$:

 $$ h_{W_{\alpha}, K}:=\left\{
\begin{array}{lcl}
f(x_j) \, \, \, \, \, \, \, \, \, \, on \, \, \, \, \overline{W^{j}_{\alpha}};\\
q \, \, \, \, \, \, \, \, on \, \, \, A\setminus W_{\alpha} \\
p \, \, \, \, \, \, \, \, on \, \, \, B\setminus W_{\alpha}. \\
\end{array}
\right.
$$

Note that $h_{W_{\alpha}, K}\in Q(X,Y)$ and $f\in \overline{\{h_{W_{\alpha}, K}: W_{\alpha}\in \gamma\}}$.

Since $t(f, Q_p(X,Y)) \leq \omega$, there is a countable family $\{h_{W_{\alpha_i}, K_i}: i\in \mathbb{N}\}$ such that $f\in \overline{\{h_{W_{\alpha_i}, K_i}: i\in \mathbb{N}\}}$.
It is remain that $\{W_{\alpha_i}: i\in \mathbb{N}\}\in \mathcal{K}_\Omega$.

$(4)\Rightarrow(1)$. By Lemma 3.1 in \cite{Osipov}, $X$ is a Lusin space.

Let $f\in Q_p(X,Y)$ and $f\in \overline{P}$ where $P\subset Q_p(X,Y)$.

Let us first reduce the general case to the case $Y=\mathbb{H}$.
Since $f$ is a quasicontinuous function on a Lusin space, the set $A=f[X]$ is separable.
Thus, by Lemma \ref{lemma_func_flattening} there is a function $\varphi : Y \to \mathbb{H}$ such that for every family $T$ of functions $X \to Y$ we have
$f \in \overline{T}$ if and only if $f \circ \varphi \in \overline{\{h \circ \varphi : h \in T\}}$, so now we only have to consider the case $Y=\mathbb{H}$.

By Corollary \ref{cor3.5}, there is a $\mathrm{DA}$-family $\mathcal{E}$ for $f$.
For every $F\in \mathcal{E}$ denote by $\varepsilon_F:=\mathrm{dc}(f, F)$. Thus, for every $F\in \mathcal{E}$ there is $g_F\in C(F,Y)$ such that $d(f(x),g_F(x))\leq \varepsilon_F$ for every $x\in F$.

By Lemma \ref{lem5}, $F$ is $\mathcal{K}_\Omega$-Lindel{\"o}f. Note that $d(g_F, P\upharpoonright F)\leq \varepsilon_F$.  By Lemma \ref{lem6.2}, there is $T_F\in [P]^{\leq \omega}$ such that $d(g_F,T\upharpoonright F)\leq \varepsilon_F$.

Consider $T=\bigcup \{T_F: F\in \mathcal{E}\}$. It is remain note that $T\in [P]^{\leq \omega}$  and $f\in \overline{T}$.

$(3)\Rightarrow(4)$. It is the implication $(2)\Rightarrow(4)$ for $Y=\mathbb{R}$.

\end{proof}

\subsection{Fan-tightness}

\begin{lemma}\label{lem6.2}
Let $X$ be an open
Whyburn topological space and $(Y,d)$ be metric space, $f\in C(X,Y)$ and suppose that $X$ satisfies $S_{\mathrm{fin}}(\mathcal{K}_\Omega, \mathcal{K}_\Omega)$.
Then for every $P_n \subseteq Q(X,Y)$ such that $\textbf{d}(f,P_n)\leq \varepsilon$ there is a finite subset $S_n$ of $P_n$ such that $\textbf{d}(f,\bigcup S_n)\leq \varepsilon$.
\end{lemma}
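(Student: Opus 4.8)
The statement is the fan-tightness analogue of Lemma \ref{lem6.2} (the tightness version), with the single selection principle replaced by the countable selection principle $S_{\mathrm{fin}}(\mathcal{K}_\Omega, \mathcal{K}_\Omega)$ and with a sequence $\{P_n\}$ of approximating families in place of a single family $P$. So the plan is to adapt the earlier proof almost verbatim, running the hedgehog/semi-open-cover machinery simultaneously across all $n$ and feeding the resulting sequence of $\mathcal{K}_\Omega$-covers into $S_{\mathrm{fin}}$ rather than into $\mathcal{K}_\Omega$-Lindel\"ofness.

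First, for every $h \in P_n$ and every $m \in \mathbb{N}$ I would define
\[
V^n_{h,m} = \{x \in X : d(f(x), h(x)) < \varepsilon + \tfrac{1}{m}\},
\]
which is semi-open exactly as before, since $h \in Q(X,Y)$ and $V^n_{h,m}$ is a union of preimages of open balls (Theorem 2 in \cite{Lev}). Because $\textbf{d}(f, P_n) \leq \varepsilon$, each $\mathcal{V}^n_m = \{V^n_{h,m} : h \in P_n\}$ is a semi-open $\omega$-cover of $X$. Using that $X$ is open Whyburn, I would thin each $V^n_{h,m}$ down to an open set whose closure sits inside it and still captures a prescribed finite set, producing for each pair $(n,m)$ a family in $\mathcal{K}_\Omega$.

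The key difference from the tightness case is how these covers are consumed. I would re-index the countable collection $\{(n,m) : n,m \in \mathbb{N}\}$ as a single sequence and apply $S_{\mathrm{fin}}(\mathcal{K}_\Omega, \mathcal{K}_\Omega)$ to the corresponding sequence of $\mathcal{K}_\Omega$-families: this returns, for each pair $(n,m)$, a \emph{finite} subfamily, and their union is again in $\mathcal{K}_\Omega$. For a fixed $n$, taking $S_n$ to be the (finite, summed over the finitely many relevant $m$ or diagonally chosen) set of functions $h$ witnessing the selected open sets, I would conclude that $\bigcup_n S_n$ has the property that $\textbf{d}(f, \bigcup_n S_n) \leq \varepsilon$. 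The point is that a finite set $K$, approximated to within $\varepsilon + \frac{1}{m}$ for every $m$ by some selected $h$, is approximated to within $\varepsilon$ in the limit, giving $\textbf{d}(f, \bigcup_n S_n) \leq \varepsilon$.

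The main obstacle I anticipate is bookkeeping: the earlier lemma needed one $\mathcal{K}_\Omega$ family per $m$ and extracted one countable set, whereas here each $P_n$ must yield a \emph{finite} $S_n$, yet the tolerance $\varepsilon + \frac{1}{m}$ still requires letting $m \to \infty$. So I must arrange the single $S_{\mathrm{fin}}$ application so that, for each $n$, only finitely many functions from $P_n$ are ever chosen, while still collectively realizing every finite set to arbitrary precision. The clean way is to run the selection principle on the sequence indexed by $(n,m)$ and observe that the finiteness of each selected subfamily forces each $S_n$ to be finite, while the $\mathcal{K}_\Omega$-membership of the union guarantees that every finite $K \subseteq X$ lies in $\overline{W}$ for selected open sets at arbitrarily large $m$, hence is $\varepsilon$-approximated by $\bigcup_n S_n$. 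Verifying that this union really is an element of $\mathcal{K}_\Omega$ (no member dense, every finite set captured) is the step deserving the most care.
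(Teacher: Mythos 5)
Your setup (the semi-open sets $V^n_{h,m}$ via Levine's theorem, the open Whyburn thinning, and the resulting $\mathcal{K}_\Omega$-families) matches the paper's, but the way you consume the covers breaks down at exactly the step you yourself flag as ``bookkeeping''. If you enumerate all pairs $(n,m)$ into one sequence and apply $S_{\mathrm{fin}}(\mathcal{K}_\Omega,\mathcal{K}_\Omega)$ once, the principle returns a finite subfamily $T^n_m$ for \emph{every} pair $(n,m)$. For a fixed $n$ there are infinitely many $m$, so the set of functions from $P_n$ witnessing some selected set is a union of infinitely many finite sets, and nothing forces it to be finite. Your assertion that ``the finiteness of each selected subfamily forces each $S_n$ to be finite'' is therefore false as stated, and the two escape hatches you mention (``finitely many relevant $m$'', ``diagonally chosen'') are not backed by any argument. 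The paper avoids this problem by coupling the tolerance to the family index from the start: the $n$-th cover fed to $S_{\mathrm{fin}}$ is built from $P_n$ alone, at tolerance $\varepsilon+\frac{1}{n}$, so stage $n$ of the selection touches only $P_n$ and each $S_n$ is finite by construction.

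Your other key claim --- that $\mathcal{K}_\Omega$-membership of the union of selections guarantees every finite $K$ is captured by selected sets ``at arbitrarily large $m$'' --- is also unjustified. A family in $\mathcal{K}_\Omega$ promises, for each finite $K$, only \emph{one} member $W$ with $K\subseteq\overline{W}$; a single application of $S_{\mathrm{fin}}$ cannot prevent all sets capturing a given $K$ from coming from the coarsest tolerance level, which yields only $d(f(x),h(x))<\varepsilon+1$ on $K$, not the required bound in the limit. (To be fair, the paper's own closing sentence is terse on this very point; but its diagonal scheme at least secures finiteness of each $S_n$.) A clean repair that fixes both defects: partition $\mathbb{N}$ into infinitely many infinite sets $A_1,A_2,\dots$; for each fixed $m$ apply $S_{\mathrm{fin}}$ \emph{separately} to the sequence of covers built from the families $P_n$ with $n\in A_m$, all at tolerance $\varepsilon+\frac{1}{m}$, obtaining finite $T^n_m\subseteq\mathcal{W}^n_m$ ($n\in A_m$) whose union over $n\in A_m$ lies in $\mathcal{K}_\Omega$; then let $S_n\subseteq P_n$ be the finite set of functions witnessing $T^n_m$ for the unique $m$ with $n\in A_m$. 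Given $r>\varepsilon$, choose $m$ with $\varepsilon+\frac{1}{m}\le r$: the $m$-th selection alone is a $\mathcal{K}_\Omega$-family, so every finite $K$ satisfies $K\subseteq\overline{W}\subseteq V^n_{h,m}$ for some $n\in A_m$ and $h\in S_n$, giving $d(f(x),h(x))<\varepsilon+\frac{1}{m}\le r$ on $K$; hence $\textbf{d}(f,\bigcup_n S_n)\le\varepsilon$, with every $S_n$ finite.
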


\begin{proof} For every $n\in \mathbb{N}$  and $h\in P_n$ and denote by $V_{h,n}=\{x\in X: d(f(x),h(x))<\varepsilon+\frac{1}{n}\}$. Note that $V_{h,n}$ is semi-open because $h\in Q(X,Y)$ and $V_{h,n}=\bigcup \{h^{-1}(\{y\in Y: d(f(x),y)<\varepsilon+\frac{1}{n}\}): x\in X\}$ (see Theorem 2 in \cite{Lev}).

Since $\textbf{d}(f,P)\leq \varepsilon$, $\mathcal{V}_n=\{V_{h,n}: h\in P\}$ is a semi-open $\omega$-cover of $X$ for every $n\in \mathbb{N}$.

Since $X$ is an open
Whyburn topological space, for every  $V_{h,n}$ and a finite subset $K$ of $V_{h,n}$ we can consider an open set $W_{K,h,n}$ such that $K\subset \overline{W_{K,h,n}}\subset V_{h,n}$.
Then the family $\{W_{K,h,n}: K\in [V_{h,n}]^{<\omega}$ and $h\in P_n\}\in \mathcal{K}_{\Omega}$ for every $n\in \mathbb{N}$.

Since $X$ satisfies $S_{\mathrm{fin}}(\mathcal{K}_\Omega, \mathcal{K}_\Omega)$, for each $n\in \mathbb{N}$ there is a finite family $T_n=\{W_{K_1,h_1,n},..., W_{K_{s(n)},h_{s(n)}}\}$ such that  $\bigcup \{T_n: n\in \mathbb{N}\}\in \mathcal{K}_\Omega$.

It remains to note that the family $\{S_n: S_n=\{h_1,...,h_{s(n)}\}\subset P_n, n\in \mathbb{N}\}$ such that $\textbf{d}(f,\bigcup S_n)\leq \varepsilon$.

\end{proof}

\begin{theorem} For every metrizable space $X$ the following conditions are equivalent:

\begin{enumerate}
\item $vet(Q_p(X,Y)) = \omega$ for all metric spaces $Y$;

\item There is a metric space $Y$ and a function $f \in Q_p(X, Y)$ such that $vet(f, Q_p(X,Y)) = \omega$;

\item There is a function $f \in Q_p(X, \mathbb{R})$ such that $vet(f, Q_p(X, \mathbb{R})) = \omega$;

\item $X$ satisfies $S_{\mathrm{fin}}(\mathcal{K}_\Omega, \mathcal{K}_\Omega)$.
\end{enumerate}
\end{theorem}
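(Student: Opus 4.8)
The plan is to follow the architecture of Theorem \ref{th21} verbatim, replacing single covers by sequences of covers and the countable-tightness selection by the countable-fan-tightness selection. The implications $(1)\Rightarrow(2)$ and $(1)\Rightarrow(3)$ are trivial, and $(3)\Rightarrow(4)$ is the special case $Y=\mathbb{R}$ of $(2)\Rightarrow(4)$. Throughout I will use the elementary fact that countable fan-tightness at a point implies countable tightness at that point (take $A_n=A$ for every $n$), so that every conclusion about $t(f,Q_p(X,Y))$ reached in the proof of Theorem \ref{th21} remains available here.

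For $(2)\Rightarrow(4)$, I fix an $f$ with $vet(f,Q_p(X,Y))=\omega$ and a sequence $\{\gamma_n:n\in\omega\}$ of families in $\mathcal{K}_\Omega$, and must produce finite $\delta_n\subseteq\gamma_n$ with $\bigcup_n\delta_n\in\mathcal{K}_\Omega$. For each fixed $n$ I run the construction of Theorem \ref{th21} with $\gamma_n$ in place of $\gamma$, obtaining a family $F_n\subseteq Q(X,Y)$ of the auxiliary quasicontinuous functions (the $f_{K,\alpha}$ when $f(X)\neq Y$, or the $h_{W_\alpha,K}$ when $f(X)=Y$) with $f\in\overline{F_n}$; the disjoint open sets $U,V$ needed in the case $f(X)=Y$ are furnished by the same $2^{\mathfrak c}$-argument, now read through $vet(f,\cdot)=\omega\Rightarrow t(f,\cdot)=\omega$. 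Since $f\in\bigcap_n\overline{F_n}$, countable fan-tightness yields finite $S_n\subseteq F_n$ with $f\in\overline{\bigcup_n S_n}$. Letting $\delta_n\subseteq\gamma_n$ collect the covers indexing the members of $S_n$, the neighbourhood test $[K,Y\setminus\{y\}]$ (respectively the test built from $U,V,p,q$) applied to $f\in\overline{\bigcup_n S_n}$ shows, exactly as before, that each finite $K$ lies in $\overline{V_\alpha}$ for some $V_\alpha\in\bigcup_n\delta_n$, whence $\bigcup_n\delta_n\in\mathcal{K}_\Omega$.

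For $(4)\Rightarrow(1)$, assume $X$ satisfies $S_{\mathrm{fin}}(\mathcal{K}_\Omega,\mathcal{K}_\Omega)$; then $X$ is $\mathcal{K}_\Omega$-Lindel\"{o}f, hence Lusin, and I first note that the argument of Lemma \ref{lem5} goes through verbatim to show that every closed subset $F\subseteq X$ again satisfies $S_{\mathrm{fin}}(\mathcal{K}_\Omega,\mathcal{K}_\Omega)$ (such $F$ being metrizable, hence open Whyburn). Given $f\in Q_p(X,Y)$ and a sequence $\{P_n\}$ with $f\in\bigcap_n\overline{P_n}$, I reduce to $Y=\mathbb{H}$ by Lemma \ref{lemma_func_flattening} and fix a countable $\mathrm{DA}$-family $\mathcal{E}=\{F_m:m\in\omega\}$ for $f$ via Corollary \ref{cor3.5}; on each $F_m$, Theorem \ref{T1} applied coordinatewise on the Hilbert cube yields a continuous $g_{F_m}$ with $\sup_{F_m}d(f,g_{F_m})\le\varepsilon_{F_m}:=osc(f|_{F_m})/2$, and a triangle inequality for $\mathbf d$ gives $\mathbf d(g_{F_m},P_n\!\upharpoonright\!F_m)\le\varepsilon_{F_m}$ for every $n$. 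The essential new device is to fix a partition $\omega=\bigsqcup_m A_m$ into infinite pieces and, for each $m$, to apply the fan-tightness Lemma \ref{lem6.2} on $F_m$ to $g_{F_m}$ and the subsequence $\{P_n\!\upharpoonright\!F_m:n\in A_m\}$, obtaining finite $S_n\subseteq P_n$ for $n\in A_m$ with $\mathbf d(g_{F_m},\bigcup_{n\in A_m}S_n\!\upharpoonright\!F_m)\le\varepsilon_{F_m}$. Since the $A_m$ partition $\omega$, this defines a single finite set $S_n$ for every $n$.

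It remains to verify $f\in\overline{\bigcup_n S_n}$: given a finite $K\subseteq X$ and $\delta>0$, the $\mathrm{DA}$-property supplies an $F_m\supseteq K$ with $osc(f|_{F_m})<\delta$, so $\varepsilon_{F_m}<\delta/2$; then some $h\in\bigcup_{n\in A_m}S_n\subseteq\bigcup_n S_n$ satisfies $d(g_{F_m},h)<\delta/2$ on $K$, whence $d(f,h)\le d(f,g_{F_m})+d(g_{F_m},h)<\delta$ on $K$, giving $\mathbf d(f,\bigcup_n S_n)=0$. I expect the main obstacle to be precisely this reconciliation of the two countable parameters, the sequence $\{P_n\}$ demanded by fan-tightness and the countable $\mathrm{DA}$-family demanded by the approximation scheme: unlike in Theorem \ref{th21}, where one simply forms the countable union $\bigcup_F T_F$, here each index $n$ may be served by only finitely many functions, and distributing the pieces $F_m$ across a partition of the index set $\omega$ is exactly what makes the finiteness of each $S_n$ compatible with covering all scales $(K,\delta)$.
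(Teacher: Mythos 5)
Your proposal is correct, and its architecture coincides with the paper's: the same reduction of $(2)\Rightarrow(4)$ via the auxiliary functions $f_{K,\alpha}$ / $h_{W_\alpha,K}$, and for $(4)\Rightarrow(1)$ the same chain of tools --- flattening into $\mathbb{H}$ (Lemma \ref{lemma_func_flattening}), a $\mathrm{DA}$-family (Corollary \ref{cor3.5}), continuous approximants $g_F$ on each $F\in\mathcal{E}$, heredity to closed subsets, and the fan-tightness selection lemma. The one place where you genuinely diverge is exactly the place where the paper's own write-up is weakest: the paper applies its selection lemma to each $F\in\mathcal{E}$, obtains ``$T_F\in[P_n]^{<\omega}$'', and then simply sets $T=\bigcup\{T_F:F\in\mathcal{E}\}$, leaving unexplained how this union of countably many selections (one per member of $\mathcal{E}$, each itself producing one finite set per index $n$) can be reorganized into a \emph{single} finite set $S_n\subseteq P_n$ for each $n$, as the definition of $vet$ demands; read literally, the union gives countable, not finite, selections from each $P_n$. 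Your device --- partitioning $\omega$ into infinite pieces $A_m$, assigning the piece $A_m$ to the member $F_m$ of the $\mathrm{DA}$-family, and running the selection lemma for $g_{F_m}$ only on the subsequence $\{P_n:n\in A_m\}$ --- is precisely what makes the bookkeeping work, and your closing verification (choose $F_m\supseteq K$ with $osc(f|_{F_m})<\delta$, then triangulate through $g_{F_m}$) is sound because each $A_m$ is infinite, so every scale is still served. You also correctly note a second point the paper glosses over: the fan-tightness selection lemma needs the closed set $F$ to satisfy $S_{\mathrm{fin}}(\mathcal{K}_\Omega,\mathcal{K}_\Omega)$, whereas the paper only invokes Lemma \ref{lem5}, which as stated yields $\mathcal{K}_\Omega$-Lindel\"{o}fness; your remark that the proof of Lemma \ref{lem5} transfers to the $S_{\mathrm{fin}}$ property (countable sets trivially satisfy it, and the cited facts about open subsets and countable unions have $S_{\mathrm{fin}}$ analogues) is the right way to close that gap. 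In short: same route as the paper, but your version is the one that actually compiles as a rigorous proof at the two delicate steps.
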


\begin{proof} $(1)\Rightarrow(2)$ and $(1)\Rightarrow(3)$ are trivial.

$(2)\Rightarrow(4)$. (i) Suppose that $vet(f, Q_p(X,Y))=\omega$ for some $f \in Q_p(X, Y)$, $f(X)\neq Y$ where $Y$ is metrizable and $|Y|>1$.

Let $\gamma_i=\{V^i_{\alpha}\}$ be a family of open subsets of $X$ such that $\gamma_i\in \mathcal{K}_\Omega$ for each $i\in \mathbb{N}$  and $y\in Y\setminus f(X)$.

For every $i\in \mathbb{N}$ and $K\in [X]^{<\omega}$ there is $V^i_{\alpha}\in \gamma_i$ such that $K=\{x_1,...,x_k\}\subset \overline{V^i_{\alpha}}$.
Since $X$ is metrizable (hence, it is open
Whyburn), there are open subsets $W_1$,...,$W_k$ of $X$ such that $x_s\in \overline{W_s}\subset \overline{V^i_{\alpha}}$  for $s=1,...,k$, $\overline{W_m}\cap \overline{W_j}=\emptyset$ for $m\neq j$.
Consider the function

 $$ f^i_{K,\alpha}:=\left\{
\begin{array}{lcl}
f(x_s) \, \, \, \, \, \, \, \, \, \, on \, \, \, \, \overline{W_s};\\
y \, \, \, \, \, \, \, \, on \, \, \, X\setminus (\bigcup\{\overline{W_s}: s=1,...,k\}).\\
\end{array}
\right.
$$

Let $F_i=\{f^i_{K,\alpha}: K\in [X]^{<\omega}\}$. It is clear that $f\in \overline{F_i}$. Then for every $i\in \mathbb{N}$
 there is a finite subset $S_i=\{f^i_{K_1,\alpha_1},..., f^i_{K_{l(i)},\alpha_{l(i)}}\}$ of $F_i$ such that $f\in \overline{\bigcup \{S_i: i\in \mathbb{N}\}}$.

Claim that $\{V^i_{\alpha_p}: p\in \{1,..., l(i)\}, i\in \mathbb{N}\}\in \mathcal{K}_\Omega$.

Let $K\in [X]^{<\omega}$. Consider the open neighborhood $[K,Y\setminus\{y\}]:=\{h\in Q_p(X,Y): h(x)\in Y\setminus\{y\}$ for every $x\in K\}$ of the point $f$ in the space $Q_p(X,Y)$. Then, there is $i\in \mathbb{N}$ and $p\in \{1,..., l(i)\}$  $f^i_{K_p,\alpha_p}\in S\cap [K,Y\setminus\{y\}]$. It follows that $K\subset \overline{V^i_{\alpha_p}}$ and, hence, $\{V^i_{\alpha_p}: p\in \{1,..., l(i)\}, i\in \mathbb{N}\}\in \mathcal{K}_\Omega$.

(ii) If $vet(f, Q_p(X,Y))=\omega$ for some $f \in Q_p(X, Y)$, $f(X)=Y$ where $Y$ is metrizable and $|Y|>1$. Let $\gamma_i=\{W^i_{\alpha}\}$ be a family of open subsets of $X$ such that $\gamma_i\in \mathcal{K}_\Omega$ for each $i\in \mathbb{N}$. Then, similar to implication $(2)\Rightarrow(4)$ in Theorem \ref{th21}, we construct the function $h_{W_{\alpha,i}, K}$ for each $W_{\alpha,i}\in \gamma_i$.

Let $F_i=\{h_{W_{\alpha,i}, K}: K\in [X]^{<\omega}\}$. It is clear that $f\in \overline{F_i}$. Then for every $i\in \mathbb{N}$
 there is a finite subset $S_i=\{h^1_{W_{\alpha_1,i}, K_1},...,h^n_{W_{\alpha_{n(i)},i}, K_{n(i)}}\}$ of $F_i$ such that $f\in \overline{\bigcup \{S_i: i\in \mathbb{N}\}}$. It is remain note that
$\{W_{\alpha_p,i}: p\in \{1,..., n(i)\}, i\in \mathbb{N}\}\in \mathcal{K}_\Omega$.

$(3)\Rightarrow(1)$. It is clear that any space satisfies $S_{\mathrm{fin}}(\mathcal{K}_\Omega, \mathcal{K}_\Omega)$ is $\mathcal{K}_\Omega$-Lindel{\"o}f. Then, by Lemma 3.1 in \cite{Osipov}, $X$ is a Lusin space.

Let $f\in Q_p(X,Y)$ and $f\in \overline{P_n}$  where $P_n\subset Q_p(X,Y)$ for each $n\in \mathbb{N}$.

Similar to the previous theorem, now we only have to consider the case $Y=\mathbb{H}$.

By Corollary \ref{cor3.5}, there is a $\mathrm{DA}$-family $\mathcal{E}$ for $f$.
For every $F\in \mathcal{E}$ denote by $\varepsilon_F:=\mathrm{dc}(f, F)$. Thus, for every $F\in \mathcal{E}$ there is $g_F\in C(F,Y)$ such that $d(f(x),g_F(x))\leq \varepsilon_F$ for every $x\in F$.

By Lemma \ref{lem5}, $F$ is $\mathcal{K}_\Omega$-Lindel{\"o}f.

Note that $d(g_F, P_n\upharpoonright F)\leq \varepsilon_F$.  By Lemma \ref{lem6.2}, there is $T_F\in [P_n]^{<\omega}$ such that $d(g_F,T_F\upharpoonright F)\leq \varepsilon_F$.

Consider $T=\bigcup \{T_F: F\in \mathcal{E}\}$. It is remain note that $f\in \overline{T}$.

$(3)\Rightarrow(4)$. It is the implication $(2)\Rightarrow(4)$ for $Y=\mathbb{R}$.

\end{proof}

\subsection{Strong fan-tightness}

Similar to the previous theorem, the following result is easy to prove.

\begin{theorem}  For every metrizable space $X$ the following conditions are equivalent:

\begin{enumerate}
\item $vet_1(Q_p(X,Y)) = \omega$ for all metric spaces $Y$;

\item There is a metric space $Y$ and a function $f \in Q_p(X, Y)$ such that $vet_1(f, Q_p(X,Y)) = \omega$;

\item There is a function $f \in Q_p(X, \mathbb{R})$ such that $vet_1(f, Q_p(X,\mathbb{R})) = \omega$;

\item $X$ satisfies $S_1(\mathcal{K}_\Omega, \mathcal{K}_\Omega)$.
\end{enumerate}
\end{theorem}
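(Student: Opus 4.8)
The plan is to run the argument exactly parallel to the preceding theorem on fan-tightness, everywhere replacing the selection principle $S_{\mathrm{fin}}(\mathcal{K}_\Omega,\mathcal{K}_\Omega)$ by $S_1(\mathcal{K}_\Omega,\mathcal{K}_\Omega)$ and replacing each ``choose a finite subset of $P_n$'' by ``choose a single element of $P_n$''. As there, $(1)\Rightarrow(2)$ and $(1)\Rightarrow(3)$ are trivial, and $(3)\Rightarrow(4)$ is merely the instance $Y=\mathbb{R}$ of $(2)\Rightarrow(4)$, so the whole content sits in $(2)\Rightarrow(4)$ and $(3)\Rightarrow(1)$.

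For $(2)\Rightarrow(4)$ I would reuse verbatim the two constructions from the proof of Theorem~\ref{th21} (the case $f(X)\ne Y$, where one uses the functions $f^i_{K,\alpha}$ built on disjoint closures $\overline{W_s}$ obtained from the open Whyburn property, and the case $f(X)=Y$, where one splits $X=A\sqcup B$ and uses the functions $h_{W_\alpha,K}$). Given a sequence $\gamma_i=\{V^i_\alpha\}\in\mathcal{K}_\Omega$ ($i\in\mathbb{N}$), for each $i$ this produces a family $F_i$ of test functions with $f\in\overline{F_i}$. Applying $vet_1(f,Q_p(X,Y))=\omega$ now yields a \emph{single} function per index, say $f^i_{K_i,\alpha_i}\in F_i$, with $f\in\overline{\{f^i_{K_i,\alpha_i}:i\in\mathbb{N}\}}$. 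Testing membership against the neighbourhood $[K,Y\setminus\{y\}]$ (resp.\ its analogue in the second case) for an arbitrary finite $K$ forces $K\subseteq\overline{V^i_{\alpha_i}}$ for some $i$; hence the one-per-family choice $\{V^i_{\alpha_i}:i\in\mathbb{N}\}$ lies in $\mathcal{K}_\Omega$, which is exactly $S_1(\mathcal{K}_\Omega,\mathcal{K}_\Omega)$.

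For $(3)\Rightarrow(1)$ the first step is an $S_1$-version of Lemma~\ref{lem6.2}: if $X$ is open Whyburn, $f\in C(X,Y)$, $X$ satisfies $S_1(\mathcal{K}_\Omega,\mathcal{K}_\Omega)$ and $\mathbf{d}(f,P_n)\le\varepsilon$ for a sequence $P_n\subseteq Q(X,Y)$, then one can pick a single $h_n\in P_n$ for each $n$ with the selected functions approximating $f$ within $\varepsilon$. Its proof copies Lemma~\ref{lem6.2}: form the semi-open $\omega$-covers $\mathcal{V}_n=\{V_{h,n}:h\in P_n\}$ with $V_{h,n}=\{x:d(f(x),h(x))<\varepsilon+\frac{1}{n}\}$, pass through the open Whyburn property to the families $\{W_{K,h,n}\}\in\mathcal{K}_\Omega$, and apply $S_1$ to select one $W_{K_n,h_n,n}$ per $n$ with $\{W_{K_n,h_n,n}:n\in\mathbb{N}\}\in\mathcal{K}_\Omega$, whence the required $h_n\in P_n$. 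One then follows the end of the previous theorem: $X$ is Lusin, so Lemma~\ref{lemma_func_flattening} reduces the problem to $Y=\mathbb{H}$; Corollary~\ref{cor3.5} supplies a DA-family $\mathcal{E}$; on each $F\in\mathcal{E}$ a continuous $g_F$ with $d(f,g_F)\le\varepsilon_F=\mathrm{dc}(f,F)$ is available, and (by the $S_1$-analogue of Lemma~\ref{lem5}, proved by the same dichotomy nowhere dense/with interior) each closed $F$ still satisfies $S_1(\mathcal{K}_\Omega,\mathcal{K}_\Omega)$, so the $S_1$-lemma applies over $F$.

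I expect the genuine obstacle to be the \emph{bookkeeping of the two indices} $\mathcal{E}$ and $\{P_n\}$ together with the tolerances $\varepsilon_F+\frac{1}{n}$. Unlike fan-tightness, strong fan-tightness permits only one function to be drawn from each $P_n$, so one cannot simply take a countable union of finite pieces over $F\in\mathcal{E}$. The way I would reconcile this is to partition $\mathbb{N}$ into consecutive blocks $N_k$ (so that $\min N_k\to\infty$) and attach to the $k$-th block a set $F_k\in\mathcal{E}$, listing the members of $\mathcal{E}$ with repetition so that each DA-set recurs in infinitely many blocks; applying the $S_1$-lemma over $F_k$ to $\{P_n:n\in N_k\}$ then assigns one $h_n\in P_n$ to every $n$, and for a given finite $K$ and $\delta>0$ one finishes by choosing a block whose $F_k$ has small oscillation on $K$ \emph{and} whose $\min N_k$ is large enough to swallow the residual $\frac{1}{n}$. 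This is precisely the point where the proof is more rigid than its $S_{\mathrm{fin}}$ counterpart.
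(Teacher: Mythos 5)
Your proposal is correct and is essentially the paper's own approach: the paper's entire ``proof'' of this theorem is the single sentence ``Similar to the previous theorem, the following result is easy to prove,'' i.e.\ exactly the substitution of $S_1(\mathcal{K}_\Omega,\mathcal{K}_\Omega)$ for $S_{\mathrm{fin}}(\mathcal{K}_\Omega,\mathcal{K}_\Omega)$ and of single selections for finite ones that you carry out, with the same constructions $f^i_{K,\alpha}$ and $h_{W_\alpha,K}$ for $(2)\Rightarrow(4)$ and the same DA-family/flattening/approximation chain for $(4)\Rightarrow(1)$. Your closing block-partition bookkeeping (attaching blocks $N_k$ of indices to DA-sets $F_k$, with repetition and $\min N_k\to\infty$, so that only one function is drawn from each $P_n$ and the residual $\tfrac{1}{n}$ tolerances are absorbed) is in fact \emph{more} careful than the published argument, whose fan-tightness version silently pools selections over all $F\in\mathcal{E}$ and all tolerances at once; your fix is exactly what is needed to make both that proof and its $S_1$ analogue rigorous.
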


Combining the results in \cite{Os1,Osipov} and this paper, we get the following diagram in the class of metrizable spaces $X$ and $Y$.

\pagestyle{empty}

\begin{center}
\begin{tabular}{|c|c|}
\hline
$X$& $Q_p(X,\,Y)$\\
\hline
countable&Fr\'{e}chet - Urysohn\\
\hline
$S_1(K_{\Omega},\,K_{\Omega})$&$\mbox{vet}_1(Q_p(X,\,Y))=\omega$\\
\hline
$S_{fin}(K_{\Omega},\,K_{\Omega})$&$\mbox{vet}(Q_p(X,\,Y))=\omega$\\
\hline
$K_{\Omega}$-Lindel\"{o}f&$t(Q_p(X,\,Y))=\omega$\\
\hline
\end{tabular}
\end{center}

\begin{center}


Diagram 1.

\end{center}

\begin{remark} In (\cite{Osipov}, Example 6.1) it was shown that, under Jensen's axiom, there exists a metrizable Lusin space $X$ such that $vet_1(Q_p(X,\mathbb{R}))=\omega$.
An interesting open question (Questions 2 and 3 in \cite{Osipov}) remains:

{\it Is there a $T_2$-space $X$ such that $Q_p(X, \mathbb{R})$ has countable tightness  but none $Q_p(X, \mathbb{R})$ has countable (strong) fan-tightness?}
\end{remark}


\medskip

\bibliographystyle{model1a-num-names}
\bibliography{<your-bib-database>}

\end{document}